\title{Tropical Duality in $(d+2)$-angulated categories}
\author{Joseph Reid}
\date{}
\theoremstyle{definition}
\newtheorem{theorem}{Theorem}[section]
\newtheorem*{theorem*}{Theorem}
\newtheorem{proposition}[theorem]{Proposition}
\newtheorem{lemma}[theorem]{Lemma}
\newtheorem{definition}[theorem]{Definition}
\tikzset{
  big dot/.style={
    circle, inner sep=0pt, 
    minimum size=3mm, fill=black
 }
}
\tikzset{
  normal dot/.style={
    circle, inner sep=0pt, 
    minimum size=1.5mm, fill=black
 }
}
\newcommand{\splitG}{K^{\textrm{split}}_0}
\newcommand{\indT}{\textrm{Ind}_\mathscr{T}}
\newcommand{\indU}{\textrm{Ind}_\mathscr{U}}
\newcommand{\cvecU}{c_\mathscr{T}(u,\mathscr{U})}
\newcommand{\indec}{\textrm{Indec}}
\newcommand{\homs}{\textrm{Hom}_{\mathscr{C}}}
\newcommand{\tilthoms}{\textrm{Hom}_{\frac{\mathscr{C}}{[\Sigma^d \mathscr{T}]}}}
\newcommand{\tilthomsU}{\textrm{Hom}_{\frac{\mathscr{C}}{[\Sigma^d \mathscr{U}]}}}
\newcommand{\add}{\textrm{add}}
\begin{document}
\maketitle
ABSTRACT. Let $\mathscr{C}$ be a $2$-Calabi-Yau triangulated category with two cluster tilting subcategories $\mathscr{T}$ and $\mathscr{U}$. A result from \cite{JorgYak, DIJ} known as tropical duality says that the index with respect to $\mathscr{T}$ provides an isomorphism between the split Grothendieck groups of $\mathscr{U}$ and $\mathscr{T}$. We also have the notion of $c$-vectors, which using tropical duality have been proven to have sign coherence, and to be recoverable as dimension vectors of modules in a module category. \\
The notion of triangulated categories extends to the notion of $(d+2)$-angulated categories. Using a higher analogue of cluster tilting objects, this paper generalises tropical duality to higher dimensions. This implies that these basic cluster tilting objects have the same number of indecomposable summands. It also proves that under conditions of mutability, $c$-vectors in the $(d+2)$-angulated case have sign coherence, and shows formulae for their computation. Finally, it proves that under the condition of mutability, the $c$-vectors are recoverable as dimension vectors of modules in a module category.
\section{Introduction}
\thispagestyle{fancy}
Let $\mathscr{C}$ be a triangulated category with certain nice properties. The notion of a cluster tilting subcategory of $\mathscr{C}$ is due to \cite[Definition~2.2]{Iyama}, and we can define the index with respect to a cluster tilting subcategory \cite[Section~2.1]{Palu}. The index has several useful properties that aid computation and comparison of cluster tilting subcategories. Thanks to \cite{JorgYak, DIJ}, we have an isomorphism which we name Tropical Duality:
\begin{theorem*}[Yakimov-J{\o}rgensen]\cite[Theorem~1.2]{JorgYak}\cite[Cor.~6.20]{DIJ}\label{origDual}
Suppose that $\mathscr{C}$ is $2$-Calabi-Yau, $K$-linear, Hom-finite and Krull-Schmidt. For every pair of cluster tilting subcategories $\mathscr{T}$ and $\mathscr{U}$, there are inverse isomorphisms
\begin{center}
\begin{tikzpicture}[line cap = round, line join = round]
\node (a) at (-3, 0) {$K_0^{\textrm{split}}(\mathscr{T})$};
\node (b) at (3, 0) {$K_0^{\textrm{split}}(\mathscr{U}).$};

\draw [->] ([yshift= 0.2 cm]a.east) to node[above]{$-\textrm{ind}_{\mathscr{U}}\circ \Sigma |_{K_0^{\textrm{split}}(\mathscr{T})}$} ([yshift= 0.2 cm]b.west);
\draw [->] ([yshift= -0.2 cm]b.west) to node[below]{$\textrm{ind}_{\mathscr{T}}|_{K_0^{\textrm{split}}(\mathscr{U})}$} ([yshift= -0.2 cm]a.east);
\end{tikzpicture}
\end{center}
\end{theorem*}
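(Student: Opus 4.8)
The plan is to define each arrow on indecomposable generators and extend by freeness, and then to verify that the two composites act as the identity on generators; since a bijective group homomorphism is an isomorphism, this suffices. Concretely, $K_0^{\textrm{split}}(\mathscr{U})$ is free abelian on the isoclasses of indecomposables of $\mathscr{U}$, so I would define $\textrm{ind}_{\mathscr{T}}$ on a generator $[U]$ (with $U\in\mathscr{U}$ indecomposable) by choosing a triangle $T_1 \to T_0 \to U \to \Sigma T_1$ with $T_0,T_1\in\mathscr{T}$ — which exists because $\mathscr{T}$ is cluster tilting — and setting $\textrm{ind}_{\mathscr{T}}([U]) = [T_0]-[T_1]$, extended $\mathbb{Z}$-linearly. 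That this is well defined (independent of the resolution) follows from Krull--Schmidt together with rigidity of $\mathscr{T}$: two such resolutions differ only by split summands lying in $\mathscr{T}$. The other arrow is defined symmetrically.

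For the first composite, fix an indecomposable $T\in\mathscr{T}$ and a triangle $U_1 \to U_0 \to \Sigma T \to \Sigma U_1$ with $U_i\in\mathscr{U}$, so that $-\textrm{ind}_{\mathscr{U}}(\Sigma T) = [U_1]-[U_0]$. Rotating gives $T \xrightarrow{\iota} U_1 \xrightarrow{p} U_0 \xrightarrow{g} \Sigma T$, and since $\Sigma T\in\Sigma\mathscr{T}$ the connecting map $g$ lands in $\Sigma\mathscr{T}$. The key claim is $\textrm{ind}_{\mathscr{T}}(U_1) = [T] + \textrm{ind}_{\mathscr{T}}(U_0)$. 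To prove it I take a $\mathscr{T}$-resolution $T_1^0 \to T_0^0 \xrightarrow{a} U_0 \to \Sigma T_1^0$; rigidity forces the composite $g\circ a$ to lie in $\textrm{Ext}^1(\mathscr{T},\mathscr{T}) = 0$, so $a$ lifts along $p$ to $\tilde a\colon T_0^0 \to U_1$. The resulting map $\phi = (\iota,\tilde a)\colon T\oplus T_0^0 \to U_1$ fits as the middle component of a morphism from the split triangle on $T\oplus T_0^0$ to the rotated triangle, with third component $a$; the induced triangle on cones, together with $\textrm{cone}(\textrm{id}_T)=0$, gives $\textrm{cone}(\phi)\cong\textrm{cone}(a) = \Sigma T_1^0$. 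Hence $T_1^0 \to T\oplus T_0^0 \xrightarrow{\phi} U_1 \to \Sigma T_1^0$ computes the index and yields $\textrm{ind}_{\mathscr{T}}(U_1) = [T]+[T_0^0]-[T_1^0]$, so $\textrm{ind}_{\mathscr{T}}\bigl(-\textrm{ind}_{\mathscr{U}}(\Sigma T)\bigr) = \textrm{ind}_{\mathscr{T}}(U_1)-\textrm{ind}_{\mathscr{T}}(U_0) = [T]$.

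The second composite is handled by the mirror-image argument. For $U\in\mathscr{U}$ indecomposable with $\mathscr{T}$-resolution $T_1\to T_0\to U\to\Sigma T_1$, I apply $\Sigma$ and rotate to $U \to \Sigma T_1 \to \Sigma T_0 \to \Sigma U$; now $\Sigma U\in\Sigma\mathscr{U}$, so the connecting map lands in $\Sigma\mathscr{U}$, and the same lifting-and-cone computation, using a $\mathscr{U}$-resolution of $\Sigma T_0$ and rigidity $\textrm{Ext}^1(\mathscr{U},\mathscr{U})=0$, gives $\textrm{ind}_{\mathscr{U}}(\Sigma T_1) = [U] + \textrm{ind}_{\mathscr{U}}(\Sigma T_0)$. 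Therefore $(-\textrm{ind}_{\mathscr{U}}\circ\Sigma)\bigl(\textrm{ind}_{\mathscr{T}}(U)\bigr) = \textrm{ind}_{\mathscr{U}}(\Sigma T_1)-\textrm{ind}_{\mathscr{U}}(\Sigma T_0) = [U]$. Both composites fix the generators, so the two arrows are mutually inverse isomorphisms.

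I expect the main obstacle to be the index-additivity identity $\textrm{ind}_{\mathscr{T}}(U_1) = [T]+\textrm{ind}_{\mathscr{T}}(U_0)$, i.e.\ obtaining an exact equality rather than an identity up to a correction term. What makes it tractable is structural: the connecting morphism lands in the shift of the \emph{same} subcategory defining the index, so rigidity kills the obstruction to lifting the approximation, and the cone of the enlarged map is forced to be the shift of a term in $\mathscr{T}$. This is precisely where the hypotheses are used — rigidity and the existence of two-term resolutions for every object, the latter guaranteed in the $2$-Calabi--Yau setting by the self-duality of the cluster-tilting condition. The remaining points (well-definedness of the index from Krull--Schmidt and Hom-finiteness, and additivity under direct sums so the maps are genuine homomorphisms) are routine and I would record them first.
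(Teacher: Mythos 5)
Your argument is correct, and it is worth noting that the paper does not actually prove this statement: it is quoted from the cited sources, and what the paper proves (Theorem \ref{tropDual}) is the $(d+2)$-angulated analogue. Comparing your route with that proof is still instructive, because the skeleton is identical --- resolve a generator by one cluster tilting subcategory, rotate the (d+2)-angle, and show that applying the other index to the rotated angle is \emph{exactly} additive because the connecting morphism is killed by rigidity --- but the two arguments diverge at the additivity step. The paper invokes \cite[Theorem~C]{JustJorg} as a black box: for any $(d+2)$-angle the alternating sum of indices equals $\theta([\mathrm{Im}\,F\gamma])$, and the correction term vanishes here since $F_U(\delta)$ lands in $\mathrm{Hom}(\mathscr{U},\Sigma^d\mathscr{U})=0$. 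You instead prove the needed special case of additivity by hand, in the $d=1$ setting: lift the $\mathscr{T}$-approximation of $U_0$ along $p$ (possible because the obstruction $g\circ a$ lies in $\mathrm{Ext}^1(\mathscr{T},\mathscr{T})=0$), assemble $\phi=(\iota,\tilde a)$, and identify $\mathrm{cone}(\phi)\cong\mathrm{cone}(a)=\Sigma T_1^0$ via the morphism of triangles with identity first component. This is essentially the Dehy--Keller/Palu argument; it is more elementary and self-contained, at the cost of only handling the case where the correction term vanishes (which is all that is needed) and of not generalising verbatim to $d>1$, where the lifting must be iterated through a length-$(d+1)$ resolution. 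Two small points you should make explicit if you write this up: the well-definedness of the index genuinely needs the comparison with a \emph{minimal} right approximation in the Krull--Schmidt setting, not just ``differ by split summands''; and the isomorphism of cones requires the $3\times 3$ (or octahedral) completion of the given commutative square, since not every termwise morphism of triangles induces a triangle on cones --- one must choose a good completion, exactly as in Dehy--Keller.
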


This implies, as shown already by Dehy and Keller \cite{DehyKeller}, that all cluster tilting subcategories of $\mathscr{C}$ have the same number of indecomposable objects. \\

We have the notion of homological $c$-vectors with respect to these objects, as defined in \cite[Definition~2.8]{JorgYak}. J{\o}rgensen and Yakimov proved in \cite[Theorem~1.2(2)]{JorgYak} that these $c$-vectors can be obtained as dimension vectors in the module category of the endomorphism ring of a cluster tilting object, which generalises work done by A. N\'{a}jera Ch\'{a}vez \cite{Chavez}. \\

In this paper we will generalise these results into the higher homological case. We recall some important definitions before stating these results. Instrumental to everything we do here are the notions of Oppermann-Thomas cluster tilting subcategory and index. These definitions require a $(d+2)$-angulated category as defined by Geiss, Keller, and Oppermann \cite{GKO}. We will recap this in section 2. For the following definitions, we let ($\mathscr{C}, \Sigma^d, \pentagon$) be a $(d+2)$-angulated category.
\begin{definition}\cite[Definition~5.3]{OppermannThomas}\label{OTDef}
Let $\mathscr{C}$ be a $(d+2)$-angulated category, and let $T \in \mathscr{T}$, where $\mathscr{T} = \textrm{add}(T)$ is the corresponding additive subcategory of $\mathscr{C}$. We call $T$ an \textit{Oppermann-Thomas cluster tilting object} of $\mathscr{C}$ if:
\begin{itemize}
\item[(i)] $\homs{}(\mathscr{T}, \Sigma^d(\mathscr{T})) = 0$,
\item[(ii)] for any $c \in \mathscr{C}$, there exists a $(d+2)$-angle
\begin{align}
t_d \rightarrow t_{d-1} \rightarrow \cdots \rightarrow t_1 \rightarrow t_0 \rightarrow c \rightarrow \Sigma^d(t_d) \label{theseq}
\end{align}
where $t_i \in \mathscr{T}$ for each $i$.
\end{itemize}
In this case, $\mathscr{T} = \textrm{add}(T)$ is an \textit{Oppermann-Thomas cluster tilting subcategory}.
\end{definition}

If we have an Oppermann-Thomas cluster tilting subcategory $\mathscr{T} = \textrm{add}(T)$ we may construct the \textit{split Grothendieck group} for $\mathscr{T}$, which we denote $\splitG{}(\mathscr{T})$. This group is the abelian group generated by the objects of $\mathscr{T}$, modulo all the relations of the form $[t]=[t_0] + [t_1]$ where $t \cong t_0 \oplus t_1$. This gives us the following formula:
\begin{align*}
\splitG{}(\mathscr{T}) = \coprod_{t \in \indec{}(\mathscr{T})} \mathbb{Z} \cdot [t]
\end{align*}

Using this, we may define the notion of index:

\begin{definition}\cite[Definition~B]{JustJorg}
The \textit{index} of an object $c \in \mathscr{C}$ with respect to an Oppermann-Thomas cluster tilting subcategory $\mathscr{T}$ is defined as:
\begin{align*}
\textrm{Ind}_{\mathscr{T}}(c) = \Sigma_{i=0}^d (-1)^i[t_i]
\end{align*}
where
\begin{align*}
t_d \rightarrow t_{d-1} \rightarrow \cdots \rightarrow t_1 \rightarrow t_0 \rightarrow c \rightarrow \Sigma^d(t_d)
\end{align*}
is a $(d+2)$-angle with each $t_i \in \mathscr{T}$. It follows from \cite[Remark~5.4]{JustJorg} that the index is well defined when $\mathscr{C}$ is Hom-finite with split idempotents.
\end{definition}

We introduce some notation that we will use throughout. Let $\mathscr{C}$ be a $(d+2)$-angulated category, and let $T$ be an Oppermann-Thomas cluster tilting object with $\Gamma_T = \textrm{End}(T)$. Then we can define a functor $F_T: \mathscr{C} \to \textrm{mod }\Gamma_T$ that acts by sending $x \in \mathscr{C}$ to $\homs{}(T, x)$.

We pause here to note that unlike in the classic case, there are cluster tilting subcategories in the higher case which are not mutable. We define mutability in the following way: \\
Let $U$ be a basic Oppermann-Thomas cluster tilting object of $\mathscr{C}$, and let $\{u_1, u_2, \ldots, u_m\}$ be the set of indecomposable summands of $U$. We say that $U$ is \textit{mutable} at the indecomposable summand $u \in \{u_1, u_2, \ldots, u_m\}$ if there is an indecomposable object $u^* \in \mathscr{C}$ such that the object with indecomposable summands $(\{u_1, u_2, \ldots, u_m\} \textbackslash u) \cup u^*$ is also an Oppermann-Thomas cluster tilting object. In this case we call $u^*$ a \textit{mutation} of $u$. We can then make the following definition.
\begin{definition}\label{exchangePair}
Let $\mathscr{C}$ be a $(d+2)$-angulated category, let $U$ be a basic Oppermann-Thomas cluster tilting object with $\mathscr{U} = \textrm{add}(U)$. Suppose that $U$ is mutable at $u$ with the mutation $u^*$. We call $u$ and $u^*$ an \textit{exchange pair} if $\textrm{Ext}^d(u, u^*)$ and $\textrm{Ext}^d(u^*, u)$ both have dimension 1 over $K$, and there exist two $(d+2)$-angles
\begin{align}\label{exchangeOne}
u^* \to e_d \to \ldots \to e_1 \to u \xrightarrow{\delta} \Sigma^d u^*
\end{align}
and
\begin{align}\label{exchangeTwo}
u \to f_d \to \ldots \to f_1 \to u^* \xrightarrow{\epsilon} \Sigma^d u,
\end{align}
where each $e_i$ and $f_i$ is a sum of indecomposable summands from $(\indec{}(\mathscr{U}) \backslash u)$.
\end{definition}
We note at this point that definition \ref{exchangePair} contains strong assumptions. In the Oppermann-Thomas $(d+2)$-angulated categories of Dynkin type $A_n$, these assumptions are all met and these exchange pairs exist. See section 5 for more details.

We fix some more terminology:
\begin{definition}\label{boilerplateDef}
We will often consider the following setup: $\mathscr{C}$ is a $2d$-Calabi-Yau $(d+2)$-angulated category that is $K$-linear, Hom-finite, and Krull-Schmidt. Let $T$ and $U$ be two basic Oppermann-Thomas cluster tilting objects of $\mathscr{C}$, with corresponding subcategories $\mathscr{T} = \add{}(T)$ and $\mathscr{U} = \add{}(U)$. We let $\Gamma_T = \textrm{End}(T)$ and $\Gamma_U = \textrm{End}(U)$, and let the functors $F_T$ and $F_U$ be defined as above.
\end{definition}

Finally, we can define homological $c$-vectors and $g$-vectors. For an abelian group $A$, we set
\begin{align*}
A^*:= \textrm{Hom}(A, \mathbb{Z}).
\end{align*}
If $T$ is basic and $\add{}(T)=\mathscr{T} \subseteq \mathscr{C}$ is an Oppermann-Thomas cluster tilting subcategory, then
\begin{align*}
\splitG{}(\mathscr{T})^* = \prod_{t \in \indec{}(\mathscr{T})} \mathbb{Z} \cdot [t]^*
\end{align*}
where $[t]^* \in  \splitG{}(\mathscr{T})^*$ is the unique element defined by
\begin{align*}
[t]^*([s]) = \delta_{ts} \textrm{ for all } s \in \indec{}(\mathscr{T}).
\end{align*}

\begin{definition}
Let $\mathscr{C}$, $T$, and $U$ be as in Definition \ref{boilerplateDef}. For $u \in \indec{}(\mathscr{U})$, we define the \textit{homological c-vector} of $(u, \mathscr{U})$ with respect to $\mathscr{T}$ to be the element $\cvecU{} \in \splitG{}(\mathscr{T})^*$ such that
\begin{align*}
\cvecU{}(\textrm{Ind}_\mathscr{T}(v)) = \delta_{uv} \textrm{ for each } v \in \textrm{Indec}(\mathscr{U}).
\end{align*}
\end{definition}

By Theorem $\ref{dknew}$, we see that $\cvecU{}$ exists and is unique. \\

\begin{definition}
Let $\mathscr{C}$, $T$, and $U$ be as in Definition \ref{boilerplateDef}. For $u \in \indec{}(\mathscr{U})$, we define the \textit{homological g-vector} of $u$ with respect to $\mathscr{T}$ to be the element $g_\mathscr{T}(u) = \indT{}(u)$ of $K_0^{\textrm{split}}(\mathscr{T})$.
\end{definition}

If the set $\{ e_1, e_2, \ldots, e_n \}$ is a basis of the free group $A$, then the dual basis $\{\epsilon_1, \epsilon_2 , \ldots, \epsilon_n \}$ of $a^* := \textrm{Hom}_\mathbb{Z}(A, \mathbb{Z})$ is defined by $\epsilon_i(e_j) = \delta_{ij}$.
By Theorem \ref{dknew} the $g$-vectors $\{g_\mathscr{T}(u) = \textrm{Ind}_\mathscr{T}(u)| u \in \indec{}(\mathscr{U}) \}$ are a basis of $K_0^{\textrm{split}}(\mathscr{T})$. The $c$-vectors $\{\cvecU{}| u \in \textrm{indec}(\mathscr{U}) \}$ are the dual basis of $K_0^{\textrm{split}}(\mathscr{T})^*$.\\

Having made these definitions, we state here the three main results of this paper.

\begingroup
\renewcommand{\thetheorem}{\Alph{theorem}}
\setcounter{theorem}{0}
\begin{theorem}[= Theorem \ref{tropDual}]\label{ThmA}
Let $\mathscr{C}$, $T$, and $U$ be as in Definition \ref{boilerplateDef}. Then there are inverse isomorphisms
\begin{center}
\begin{tikzpicture}[line cap = round, line join = round]
\node (a) at (-3, 0) {$K_0^{\textrm{split}}(\mathscr{T})$};
\node (b) at (3, 0) {$K_0^{\textrm{split}}(\mathscr{U}).$};

\draw [->] ([yshift= 0.2 cm]a.east) to node[above]{$(-1)^d\textrm{ind}_{\mathscr{U}} \circ \Sigma^d|_{K_0^{\textrm{split}}(\mathscr{T})}$} ([yshift= 0.2 cm]b.west);
\draw [->] ([yshift= -0.2 cm]b.west) to node[below]{$\textrm{ind}_{\mathscr{T}}|_{K_0^{\textrm{split}}(\mathscr{U})}$} ([yshift= -0.2 cm]a.east);
\end{tikzpicture}
\end{center}
\end{theorem}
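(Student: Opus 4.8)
The plan is to exhibit the two displayed maps as mutually inverse group homomorphisms. First I would check well-definedness: the index is additive on direct sums, so it descends to a homomorphism $\indT{}|_{\splitG{}(\mathscr{U})}\colon \splitG{}(\mathscr{U}) \to \splitG{}(\mathscr{T})$, and likewise for $\indU{}$; and since $\Sigma^d$ is an autoequivalence it permutes indecomposables, hence induces an automorphism of the split Grothendieck group, so $(-1)^d \indU{}\circ \Sigma^d|_{\splitG{}(\mathscr{T})}$ makes sense. By Theorem \ref{dknew} the $g$-vectors $\{\indT{}(u) : u \in \indec{}(\mathscr{U})\}$ form a basis of $\splitG{}(\mathscr{T})$, so $\indT{}|_{\splitG{}(\mathscr{U})}$ carries the standard basis of $\splitG{}(\mathscr{U})$ to a basis and is therefore an isomorphism. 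It then suffices to prove that the other map is a one-sided inverse, i.e. that $(-1)^d \indU{}(\Sigma^d(\indT{}(u))) = [u]$ for every $u \in \indec{}(\mathscr{U})$, since a one-sided inverse of an isomorphism is a two-sided inverse.

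To compute this I would start from a defining $(d+2)$-angle resolving $u$ by $\mathscr{T}$,
\[
t_d \to t_{d-1} \to \cdots \to t_0 \to u \xrightarrow{\gamma} \Sigma^d t_d,
\]
so that $\indT{}(u) = \sum_{i=0}^d (-1)^i [t_i]$. Rotating this $(d+2)$-angle $d+1$ times brings $u$ to the front and produces a $(d+2)$-angle
\[
u \to \Sigma^d t_d \to \Sigma^d t_{d-1} \to \cdots \to \Sigma^d t_0 \to \Sigma^d u,
\]
whose interior terms are exactly the shifts $\Sigma^d t_i$ occurring in $\Sigma^d(\indT{}(u))$. A short computation with the rotated trivial angle gives the normalisation $\indU{}(\Sigma^d u') = (-1)^d [u']$ for $u' \in \mathscr{U}$; in particular the front term contributes $\indU{}(u) = [u]$.

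The heart of the argument is to show that $\indU{}$ is additive along this rotated angle, i.e. that the alternating sum $\sum_{k=0}^{d+1} (-1)^k \indU{}(a_k)$ over its objects $a_0, \ldots, a_{d+1}$ vanishes. Granting this, reading off the alternating sum and bookkeeping the signs (the front term contributes $[u]$ and the interior terms contribute $\sum_{i=0}^d (-1)^i \indU{}(\Sigma^d t_i)$) yields $\sum_{i=0}^d (-1)^i \indU{}(\Sigma^d t_i) = (-1)^d [u]$; multiplying by the global sign $(-1)^d$ then gives exactly $[u]$, which is precisely where the factor $(-1)^d$ in the statement is forced. Hence $(-1)^d \indU{}\circ\Sigma^d|$ inverts $\indT{}|$, and the two maps are mutually inverse isomorphisms.

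I expect the additivity of the index along the rotated angle to be the main obstacle, since the index is emphatically \emph{not} additive along arbitrary $(d+2)$-angles --- this nonlinearity is what the word ``tropical'' refers to. The point will be that the defect is controlled for these particular angles: applying the cohomological functor $F_U = \homs{}(U, -)$ turns the rotated angle into a long exact sequence in $\textrm{mod }\Gamma_U$, and the cluster tilting vanishing $\homs{}(\mathscr{U}, \Sigma^d \mathscr{U}) = 0$ together with $2d$-Calabi--Yau duality should force the connecting maps obstructing additivity to vanish, so that there is no index defect. Making this defect-vanishing precise, and in particular identifying the relevant alternating sum with exactness of a projective presentation in $\textrm{mod }\Gamma_U$, is the step I would expect to require the most care.
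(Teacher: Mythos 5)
Your route for the composite $(-1)^d\indU{}\circ\Sigma^d\circ\indT{}$ is essentially the paper's: resolve $u\in\mathscr{U}$ by $\mathscr{T}$, rotate the $(d+2)$-angle to bring $u$ to the front, and show that the alternating sum of $\mathscr{U}$-indices over the rotated angle vanishes; your sign bookkeeping, including the normalisation $\indU{}(\Sigma^d u')=(-1)^d[u']$, is correct. But there are two genuine gaps. First, the step you yourself flag as the main obstacle --- the vanishing of the index defect along the rotated angle --- is the entire content of the proof and is left as a sketch. The paper closes it by quoting Theorem \ref{thmC} (= \cite[Theorem~C]{JustJorg}): for any $(d+2)$-angle $s_{d+1}\to\cdots\to s_0\xrightarrow{\gamma}\Sigma^d s_{d+1}$ the defect equals $\theta([\textrm{Im }F_U\gamma])$, and for the rotated angle the last map $\gamma$ lands in $\Sigma^d u$, so $F_U(\gamma)$ takes values in $\homs{}(U,\Sigma^d u)=0$ by Definition \ref{OTDef}(i). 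No Calabi--Yau duality or analysis of connecting maps is needed at that point; the $2d$-Calabi--Yau hypothesis is consumed by Theorem \ref{thmC} itself. If you do not quote that result, you must prove an additivity-with-defect formula of this type from scratch, which is a substantial piece of work, not a routine long-exact-sequence argument.

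Second, your reduction to a one-sided inverse is circular: you invoke Theorem \ref{dknew} to say the $g$-vectors $\{\indT{}(u)\mid u\in\indec{}(\mathscr{U})\}$ form a basis of $\splitG{}(\mathscr{T})$, but in this paper Theorem \ref{dknew} is deduced \emph{from} Theorem \ref{tropDual} and has no independent proof. Knowing only that $(-1)^d\indU{}\circ\Sigma^d\circ\indT{}=\mathrm{id}$ on $\splitG{}(\mathscr{U})$ makes $\indT{}|$ a split injection, not an isomorphism; the two split Grothendieck groups are not yet known to have the same rank (that is Theorem \ref{rmkB}, again a consequence of the result you are proving). The fix is the paper's symmetric computation: take a $(d+2)$-angle $u_d\to\cdots\to u_0\to\Sigma^d t\to\Sigma^d u_d$ with $u_i\in\mathscr{U}$, rotate to put $t$ in front, and apply Theorem \ref{thmC} again, this time using $\homs{}(T,\Sigma^d t)=0$, to obtain $[t]=(-1)^d\indT{}\circ\indU{}\circ\Sigma^d([t])$. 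With both composites equal to the identity the theorem follows; neither Theorem \ref{dknew} nor any rank count is needed.
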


\begin{theorem}\label{rmkB}
Let $\mathscr{C}$, $T$, and $U$ be as in Definition \ref{boilerplateDef}. Then $\mathscr{T}$ and $\mathscr{U}$ have the same number of indecomposable objects.
\end{theorem}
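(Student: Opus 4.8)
The plan is to deduce Theorem \ref{rmkB} directly from Theorem \ref{ThmA} (= Theorem \ref{tropDual}), the tropical duality statement just proved. The key observation is that an isomorphism of abelian groups preserves rank, and that for the split Grothendieck group of a basic Oppermann-Thomas cluster tilting subcategory $\mathscr{T}$, the rank is precisely the number of indecomposable objects of $\mathscr{T}$.

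More concretely, recall from the displayed formula in the excerpt that
\begin{align*}
\splitG{}(\mathscr{T}) = \coprod_{t \in \indec{}(\mathscr{T})} \mathbb{Z} \cdot [t],
\end{align*}
so $\splitG{}(\mathscr{T})$ is a free abelian group whose rank equals $|\indec{}(\mathscr{T})|$, and likewise $\splitG{}(\mathscr{U})$ is free of rank $|\indec{}(\mathscr{U})|$. First I would note that Theorem \ref{ThmA} supplies a pair of inverse isomorphisms between $\splitG{}(\mathscr{T})$ and $\splitG{}(\mathscr{U})$; in particular these two groups are isomorphic as abelian groups. Since isomorphic free abelian groups have equal rank, I would conclude that $|\indec{}(\mathscr{T})| = |\indec{}(\mathscr{U})|$, which is exactly the assertion that $\mathscr{T}$ and $\mathscr{U}$ have the same number of indecomposable objects.

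The only point requiring a little care is the invariance of rank under isomorphism. Since both groups are finitely generated (each cluster tilting object is basic with finitely many indecomposable summands) and free, the rank is a well-defined isomorphism invariant, for instance via $\dim_{\mathbb{Q}}(\splitG{}(\mathscr{T}) \otimes_{\mathbb{Z}} \mathbb{Q})$, which the isomorphism of Theorem \ref{ThmA} carries to $\dim_{\mathbb{Q}}(\splitG{}(\mathscr{U}) \otimes_{\mathbb{Z}} \mathbb{Q})$. I do not anticipate a genuine obstacle here: the entire content of the result has already been absorbed into Theorem \ref{ThmA}, and what remains is the standard algebraic fact that rank is preserved by isomorphism. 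The result is therefore essentially an immediate corollary, mirroring how the original Yakimov-J{\o}rgensen duality was observed by Dehy and Keller \cite{DehyKeller} to imply the classical equality of cardinalities.
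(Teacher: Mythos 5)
Your proposal is correct and matches the paper's approach exactly: the paper simply observes that Theorem \ref{rmkB} follows immediately from Theorem \ref{tropDual}, and your argument spells out the standard details (freeness of the split Grothendieck groups, rank as an isomorphism invariant) that the paper leaves implicit.
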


The above two results will be proven in general; that is, no mutability is required. Finally, we will prove the following:
\begin{theorem}\label{TheoremC}
Let $\mathscr{C}$, $T$, and $U$ be as in Definition \ref{boilerplateDef}. Suppose that $d$ is odd, and that $U$ is mutable at $u$ with the mutation $u^*$ such that $u$ and $u^*$ form an exchange pair. Then either (i) or (ii) below is true.
\begin{itemize}
\item[(i)] $\cvecU([t]) \geq 0$ for all $t \in \mathscr{T}$ and
\begin{align*}
\cvecU{}([t]) = \textrm{dim}_K \textrm{Hom}_{\Gamma_T}(F_T(t), \textrm{Im}(\delta_*))
\end{align*}
where $\delta$ is the morphism from $u$ to $\Sigma^d u^*$ shown in equation (\ref{exchangeOne}) and $\delta_* = F_T(\delta)$.
\item[(ii)]$\cvecU \leq 0$ for all $t \in \mathscr{T}$ and
\begin{align*}
\cvecU{}([t]) = -\textrm{dim}_K\textrm{Hom}_{\Gamma_T}(F_T(t), \textrm{Im}(\epsilon_*))
\end{align*}
where $\epsilon$ is the morphism from $u^*$ to $\Sigma^d u$ shown in equation (\ref{exchangeTwo}) and $\epsilon_* = F_T(\epsilon)$.
\end{itemize}
\end{theorem}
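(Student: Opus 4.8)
The plan is to show that each right-hand side is, as a functional on $\splitG{}(\mathscr{T})$, literally the dimension vector of an explicit $\Gamma_T$-module, and then to match it with $\cvecU{}$ via the dual-basis characterisation. First I would reinterpret the targets: since $t\in\indec{}(\mathscr{T})$, the module $F_T(t)=\homs{}(T,t)$ is the indecomposable projective $\Gamma_T$-module $P_t$, so for any finite-dimensional $M$ one has $\dim_K\textrm{Hom}_{\Gamma_T}(F_T(t),M)=[\underline{\dim}\,M]_t$; moreover $F_T$ is full on morphisms out of $\mathscr{T}$, giving a natural isomorphism $\textrm{Hom}_{\Gamma_T}(F_T(t),F_T(x))\cong\homs{}(t,x)$ and hence $\dim_K\textrm{Hom}_{\Gamma_T}(F_T(t),\textrm{Im}(\delta_*))=\dim_K\textrm{Im}\big(\homs{}(t,u)\xrightarrow{\delta\circ-}\homs{}(t,\Sigma^d u^*)\big)$. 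Thus statement (i) asserts exactly that $\cvecU{}=\underline{\dim}\,\textrm{Im}(\delta_*)$ in $\splitG{}(\mathscr{T})^*$, and (ii) that $\cvecU{}=-\underline{\dim}\,\textrm{Im}(\epsilon_*)$.

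By Theorem \ref{dknew} the $g$-vectors $\{\indT{}(v)\mid v\in\indec{}(\mathscr{U})\}$ form a basis of $\splitG{}(\mathscr{T})$ with dual basis $\{\cvecU{}\}$, so to identify a functional with $\cvecU{}$ it suffices to evaluate it on each $g$-vector. Applying $F_T$ to the defining $(d+2)$-angle $t_d\to\cdots\to t_0\to v\to\Sigma^d t_d$ of $v$ and using $\homs{}(\mathscr{T},\Sigma^d\mathscr{T})=0$ yields a projective resolution $F_T(t_\bullet)\to F_T(v)$ of length $\le d$; in particular $F_T(v)$ has finite projective dimension, so the Euler form $\langle F_T(v),-\rangle$ is well defined, and for $M=\textrm{Im}(\delta_*)$ the alternating sum computing $\langle\underline{\dim}\,M,\indT{}(v)\rangle$ is exactly $\langle F_T(v),M\rangle_{\Gamma_T}=\sum_i(-1)^i\dim_K\textrm{Ext}^i_{\Gamma_T}(F_T(v),M)$. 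Case (i) therefore reduces to the homological identity $\langle F_T(v),\textrm{Im}(\delta_*)\rangle=\delta_{uv}$ for all $v\in\indec{}(\mathscr{U})$, and case (ii) to $\langle F_T(v),\textrm{Im}(\epsilon_*)\rangle=-\delta_{uv}$.

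To prove the Euler-form identity I would apply $F_T$ to the exchange $(d+2)$-angle \eqref{exchangeOne}, read off the class $[\textrm{Im}(\delta_*)]\in K_0(\textrm{mod}\,\Gamma_T)$ as an alternating sum of the classes of $F_T(u),F_T(e_1),\ldots,F_T(e_d),F_T(u^*)$ together with the homology correction at the left-hand end of the induced complex, and then expand $\langle F_T(v),-\rangle$ additively. Each resulting term is a morphism space between summands of $U$ (all lying in $\mathscr{U}$) or a shift thereof; the $2d$-Calabi--Yau duality $D\homs{}(a,b)\cong\homs{}(b,\Sigma^{2d}a)$ converts the higher $\textrm{Ext}$ contributions and the left-end correction into $\homs{}$-spaces, while the Oppermann--Thomas vanishing $\homs{}(\mathscr{U},\Sigma^d\mathscr{U})=0$ annihilates all but finitely many of them. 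What survives is controlled by $\dim_K\textrm{Ext}^d(u,u^*)=1$ and the shape of the exchange pair, collapsing the sum to $\delta_{uv}$. I expect this step --- bounding and evaluating the left-end correction and checking that the Calabi--Yau bookkeeping leaves precisely $\delta_{uv}$ --- to be the main obstacle, and it is where odd $d$, $2d$-Calabi--Yau-ness, one-dimensional $\textrm{Ext}^d$, and the exchange-pair structure are all consumed.

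It remains to produce the dichotomy. The two formulas cannot both hold, since $\cvecU{}\ne 0$, so it suffices to show that exactly one of $\textrm{Im}(\delta_*),\textrm{Im}(\epsilon_*)$ is nonzero. Here I would argue by contradiction: if both $\delta_*=F_T(\delta)$ and $\epsilon_*=F_T(\epsilon)$ were nonzero, then composing a witnessing morphism $T\to u\xrightarrow{\delta}\Sigma^d u^*$ with the Calabi--Yau dual of a witness for $\epsilon$ would produce a nonzero element of a space forced to vanish by $\homs{}(\mathscr{T},\Sigma^d\mathscr{T})=0$ together with the one-dimensionality of $\textrm{Ext}^d(u,u^*)$, a contradiction; the sign $(-1)^d=-1$ coming from Theorem \ref{ThmA} fixes which triangle governs which sign. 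When $\textrm{Im}(\epsilon_*)=0$ the computation of the previous paragraph gives $\cvecU{}=\underline{\dim}\,\textrm{Im}(\delta_*)$, whose entries are all $\ge 0$, yielding (i); symmetrically $\textrm{Im}(\delta_*)=0$ gives $\cvecU{}=-\underline{\dim}\,\textrm{Im}(\epsilon_*)\le 0$, yielding (ii). Sign coherence is then immediate, since a dimension vector has entries of a single sign.
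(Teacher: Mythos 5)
Your opening reduction is sound and coincides with the paper's final step: because $F_T(t)$ is projective and $\homs{}(\mathscr{T},\Sigma^d\mathscr{T})=0$, one indeed has $\textrm{Hom}_{\Gamma_T}(F_T(t),\textrm{Im}(\delta_*))\cong \textrm{Im}\bigl(\homs{}(t,u)\to\homs{}(t,\Sigma^d u^*)\bigr)$, so the theorem reduces to the content of Proposition \ref{someProp}. The problem is that everything after that is a plan rather than a proof, and the plan has concrete defects. First, applying $F_T$ to the defining $(d+2)$-angle of $v$ gives a complex $F_T(t_d)\to\cdots\to F_T(t_0)\to F_T(v)\to 0$ that is exact at $F_T(t_{d-1}),\ldots,F_T(t_0)$ and onto $F_T(v)$, but exactness at $F_T(t_d)$ requires the map $\homs{}(T,\Sigma^{-d}v)\to\homs{}(T,t_d)$ in the long exact sequence to vanish, which you do not address; without this, your identification of $\sum_i(-1)^i\dim_K\textrm{Hom}_{\Gamma_T}(F_T(t_i),M)$ with an Euler form $\langle F_T(v),M\rangle$ is unjustified. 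Second, and decisively, the identity $\langle F_T(v),\textrm{Im}(\delta_*)\rangle=\delta_{uv}$ \emph{is} the theorem, and you do not prove it: you describe a bookkeeping scheme and explicitly defer ``the main obstacle''. Worse, the scheme as described is unconditional, so it would establish both $\cvecU{}=\underline{\dim}\,\textrm{Im}(\delta_*)$ and $\cvecU{}=-\underline{\dim}\,\textrm{Im}(\epsilon_*)$ simultaneously, forcing $\cvecU{}=0$, which is false; you later make the conclusion conditional on one of the two images vanishing, but the proposed derivation never uses that hypothesis, so the mechanism producing the dichotomy is missing. Third, your argument that the two images cannot both be nonzero needs a composite of two nonzero maps to be nonzero; this is exactly where the paper invokes the simplicity of $F_U(\Sigma^d u^*)$ (Lemma \ref{simpleLemma} feeding Lemma \ref{zeroLemma}), and ``composing a witness with the Calabi--Yau dual of a witness'' does not by itself yield a nonzero element of anything.

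For comparison, the paper's route supplies precisely the ingredients your sketch presupposes: tropical duality gives $\cvecU{}=(-1)^d[u]^*\circ\indU{}\circ\Sigma^d$ (Lemma \ref{lemmap1}); the exchange-pair hypothesis makes $F_U(\Sigma^d u^*)$ simple, yielding the key vanishing of at least one of two Hom-spaces (Lemmas \ref{simpleLemma} and \ref{zeroLemma}); an identity from \cite[Proposition~3.1]{Reid} converts $[u]^*\bigl(\indU{}(\Sigma^d t)\bigr)$ into a signed sum of the dimensions of those Hom-spaces (Lemma \ref{myLemma}), from which sign coherence follows (Lemma \ref{coherenceLemma}); and Serre duality together with a cokernel computation (Lemma \ref{cokerLemma}) identifies the surviving Hom-space with $\textrm{Im}(\delta_*)$ or $\textrm{Im}(\epsilon_*)$ (Proposition \ref{someProp}). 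You would need to reconstruct analogues of each of these steps before your outline becomes a proof.
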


Note that if $t$ is an indecomposable summand of $T$ then $F_T(t)$ is an indecomposable projective $\Gamma_T$-module. Hence $\textrm{dim}_K\textrm{Hom}_{\Gamma_T}(F_T(t), M)$ is an entry in the dimension vector of $M$ when $M \in \textrm{mod}\Gamma_T$ and Theorem \ref{TheoremC} shows that certain sign coherent $c$-vectors can be realised as dimension vectors.

\endgroup
\section{Definitions}
We begin with some definitions. For the purpose of this paper, $K$ is an algebraically closed field. We note also that by $\textrm{mod }\Lambda$ we denote the right $\Lambda$-modules for a finite dimensional $K$-algebra $\Lambda$.
\begin{definition}\cite[Definition~2.1]{GKO}
Let $\mathscr{C}$ be an additive category with an automorphism $\Sigma^d$ for $d \in \mathbb{Z}, 0<d$. The inverse is denoted $\Sigma^{-d}$, but we note that $\Sigma^d$ is not assumed to be the $d$-th power of another functor. Then a \textit{$\Sigma^d$-sequence} in $\mathscr{C}$ is a diagram of the form
\begin{align}
c^0 \xrightarrow{\gamma^0} c^1 \rightarrow c^2 \rightarrow \cdots \rightarrow c^d \rightarrow c^{d+1} \xrightarrow{\gamma^{d+1}} \Sigma^d(c^0).
\end{align}
\end{definition}

\begin{definition}\cite[Definition~2.1]{GKO}
A \textit{$(d+2)$-angulated category} is a triple ($\mathscr{C}, \Sigma^d, \pentagon$) where $\pentagon$ is a class of $\Sigma^d$-sequences called $(d+2)$-angles, satisfying the following conditions:
\begin{itemize}
\item [(N1)] $\pentagon$ is closed under sums and summands, and contains the $(d+2)$-angle 
\begin{align*}
c \xrightarrow{id_c} c \rightarrow 0 \rightarrow \cdots \rightarrow 0 \rightarrow 0 \rightarrow \Sigma^d(c)
\end{align*}
for each $c \in \mathscr{C}$. For each morphism $c^0 \xrightarrow{\gamma^0} c^1$ in $\mathscr{C}$, the class $\pentagon$ contains a $\Sigma^d$-sequence of the form in Definition 1.1.
\item [(N2)] The $\Sigma^d$-sequence (1) is in $\pentagon$ if and only if the $\Sigma^d$-sequence
\begin{align*}
c^1 \xrightarrow{\gamma^1} c^2 \rightarrow c^3 \rightarrow \cdots \rightarrow c^{d+1} \rightarrow \Sigma^d(c^0) \xrightarrow{(-1)^d\Sigma^d(\gamma^0)} \Sigma^d(c^1)
\end{align*}
is in $\pentagon$. This sequence is known as the left rotation of sequence (1).
\item [(N3)] A commutative diagram with rows in $\pentagon$ has the following extension property: \\
\begin{tikzpicture}[line cap = round, line join = round]

\node (a) at (-3, 2) {$b^0$};
\node (b) at (-1, 2) {$b^1$};
\node (c) at (1, 2) {$b^2$};
\node (d) at (3, 2) {$\ldots$};
\node (e) at (5, 2) {$b^{d}$};
\node (f) at (7, 2) {$b^{d+1}$};
\node (g) at (9, 2) {$\Sigma^d(b^0)$};
\node (h) at (-3, 0) {$c^0$};
\node (i) at (-1, 0) {$c^1$};
\node (j) at (1, 0) {$c^2$};
\node (k) at (3, 0) {$\ldots$};
\node (l) at (5, 0) {$c^{d}$};
\node (m) at (7, 0) {$c^{d+1}$};
\node (n) at (9, 0) {$\Sigma^d(c^0)$};

\draw [->] (a) to (b);
\draw [->] (b) to (c);
\draw [->] (c) to (d);
\draw [->] (d) to (e);
\draw [->] (e) to (f);
\draw [->] (f) to (g);

\draw [->] (h) to (i);
\draw [->] (i) to (j);
\draw [->] (j) to (k);
\draw [->] (k) to (l);
\draw [->] (l) to (m);
\draw [->] (m) to (n);

\draw [->] (a) to node[pos=0.5, right] {$\beta_0$} (h);
\draw [->] (b) to (i);
\draw [dotted, ->] (c) to (j);
\draw [dotted, ->] (e) to (l);
\draw [dotted, ->] (f) to (m);
\draw [->] (g) to node[pos=0.5, right] {$\Sigma^d(\beta_0)$} (n);

\end{tikzpicture}
\item[(N4)] The Octahedral Axiom, see \cite[Definition~2.1]{GKO}.
\end{itemize}
\end{definition}

\begin{definition}
Let $\mathscr{C}$ be a $(d+2)$-angulated category, and let $D=\textrm{Hom}_K(-, K)$ be the usual duality functor. A \textit{Serre functor} for $\mathscr{C}$ is an auto-equivalence $S: \mathscr{C} \to \mathscr{C}$ together with a family of isomorphisms which are natural in $X$ and $Y$
\begin{align*}
t_{X, Y}:\homs{}(Y, SX) \to D\homs{}(X, Y).
\end{align*}
We call the category $\mathscr{C}$ \textit{$2d$-Calabi-Yau} if $\mathscr{C}$ admits a Serre functor which is isomorphic to $(\Sigma^d)^2$, which we often write $\Sigma^{2d}$.
\end{definition}

We also state here a result that will be instrumental. Recall that if we let $\mathscr{C}$ be a $(d+2)$-angulated category, and let $T$ be an Oppermann-Thomas cluster tilting object with $\Gamma_T = \textrm{End}(T)$, then we have the functor $F_T: \mathscr{C} \to \textrm{mod }\Gamma_T$ that acts by sending $x \in \mathscr{C}$ to $\homs{}(T, x)$. In fact, by \cite[Theorem~0.5]{JorgJac}, we have a commutative diagram \\
\begin{center}
\begin{tikzpicture}[line cap = round, line join = round]

\node (a) at (-2, 2) {$\mathscr{C}$};
\node (b) at (2, 2) {$\textrm{mod }\Gamma_T$};
\node (c) at (-2, 0) {$\frac{\mathscr{C}}{[\Sigma^d \mathscr{T}]}$};
\node (d) at (2, 0) {$\mathscr{D}$};

\draw [->] (a) to node[pos=0.5, above] {$F_T$} (b);
\draw [->] (a) to (c);
\draw [->] (c) to node[pos=0.5, above] {$ \sim $} (d);
\draw [right hook->] (d) to (b);

\end{tikzpicture}
\end{center}
where $\mathscr{D}$ is $d$-cluster tilting in $\textrm{mod }\Gamma_T$. This means that for $x, y \in \mathscr{C}$, we have that $\tilthoms{}(x, y) \cong \textrm{Hom}_{\Gamma_T}(F_T(x), F_T(y))$. Here $[\Sigma^d \mathscr{T}]$ is the ideal of $\mathscr{C}$ consisting of the morphisms that factor through $\Sigma^d \mathscr{T}$.

Using this definition, the result is as follows:
\begin{theorem}\cite[Theorem~C]{JustJorg}\label{thmC}
Let $\mathscr{C}$ be a $2d$-Calabi-Yau $(d+2)$-angulated category that is $K$-linear, Hom-finite, and Krull-Schmidt. Let $T$ be an Oppermann-Thomas cluster tilting object of $\mathscr{C}$. Then there is a homomorphism of abelian groups $\theta: K_0(\textrm{mod } \Lambda) \to \splitG{}(\mathscr{T})$ such that for any $(d+2)$-angle
\begin{align*}
s_{d+1} \to s_d \to \ldots \to s_0 \xrightarrow{\gamma} \Sigma^ds_{d+1}
\end{align*}
in $\mathscr{C}$, we have that
\begin{align*}
\Sigma_{i=0}^{d+1}(-1)^i \textrm{Ind}_{\mathscr{T}}(s_i) = \theta([\textrm{Im }F_T\gamma]).
\end{align*}
\end{theorem}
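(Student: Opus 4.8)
Write $\Lambda = \Gamma_T = \textrm{End}(T)$. The plan is to build $\theta$ from the homological behaviour of $F_T = \homs{}(T,-)$ together with the projectivisation equivalence $\add{}(T) = \mathscr{T} \xrightarrow{\sim} \textrm{proj}\,\Gamma_T$, $t \mapsto F_T(t)$, which induces an isomorphism $\splitG{}(\mathscr{T}) \xrightarrow{\sim} K_0(\textrm{proj}\,\Gamma_T)$, $[t] \mapsto [F_T(t)]$; this lets me record $\theta$ in terms of projectives. The second ingredient is that, as in any $(d+2)$-angulated category, $F_T$ is homological: applying it to the $(d+2)$-angle of the statement yields a sequence that is exact at each of $F_T(s_d), \ldots, F_T(s_0)$. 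In particular $\textrm{Im}(F_T\gamma) \cong \textrm{coker}\big(F_T(s_1) \to F_T(s_0)\big)$, and I obtain an exact sequence
\begin{align*}
F_T(s_{d+1}) \to F_T(s_d) \to \cdots \to F_T(s_0) \to \textrm{Im}(F_T\gamma) \to 0. \tag{$\ast$}
\end{align*}

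Next I would observe that every object of $\textrm{mod}\,\Gamma_T$ arises this way. Given $M$, choose a projective presentation $F_T(t_1) \xrightarrow{F_T(\phi)} F_T(t_0) \to M \to 0$ with $t_0, t_1 \in \mathscr{T}$ and $\phi \colon t_1 \to t_0$ a morphism of $\mathscr{C}$ (possible by the equivalence above, since every finite-dimensional $\Gamma_T$-module is finitely presented). Completing $\phi$ to a $(d+2)$-angle via (N1) and applying the rotation axiom (N2) until $\phi$ sits in the position $s_1 \to s_0$, I get a $(d+2)$-angle whose connecting morphism $\gamma$ satisfies $\textrm{Im}(F_T\gamma) = \textrm{coker}(F_T\phi) = M$ by $(\ast)$. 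I then \emph{define}
\begin{align*}
\theta([M]) := \sum_{i=0}^{d+1} (-1)^i \,\textrm{Ind}_{\mathscr{T}}(s_i) \in \splitG{}(\mathscr{T}),
\end{align*}
so that the required identity holds for the chosen angle tautologically; the two endpoints contribute $\textrm{Ind}_{\mathscr{T}}(t_0) = [t_0]$ and $\textrm{Ind}_{\mathscr{T}}(t_1) = [t_1]$, i.e.\ the projective-presentation part, while the completion terms $s_2, \ldots, s_{d+1}$ supply the higher syzygy corrections.

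The heart of the argument, and the step I expect to be the main obstacle, is to show that this prescription is \emph{well defined} and \emph{additive}: that the right-hand side depends only on the class $[\textrm{Im}(F_T\gamma)] \in K_0(\textrm{mod}\,\Gamma_T)$ (equivalently, that the displayed formula then holds for \emph{every} $(d+2)$-angle, not only the chosen ones), and that $\theta([M]) = \theta([M']) + \theta([M''])$ for each short exact sequence $0 \to M' \to M \to M'' \to 0$. My strategy is to compare the alternating index sums of different realising angles via morphisms of $(d+2)$-angles: the extension axiom (N3) lifts a map of images to a map of angles, and the octahedral axiom (N4) converts a short exact sequence of images into a relation among three $(d+2)$-angles. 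Along such comparisons the index sum is controlled because $\textrm{Ind}_{\mathscr{T}}$ is additive on direct sums and is computed through the sequence $(\ast)$, so that the contributions of the intermediate terms telescope. I expect the genuinely delicate point to be the syzygy reduction: from $(\ast)$ the first syzygy $N = \ker(F_T(s_0) \to M)$ is again an image, giving $[M] = [F_T(s_0)] - [N]$ in $K_0$, and one must match the sign pattern of the two alternating sums across this reduction.

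The $2d$-Calabi--Yau hypothesis enters in two places. First, it guarantees that $\textrm{Ind}_{\mathscr{T}}$ is defined at all, via the well-definedness result \cite[Remark~5.4]{JustJorg} for Hom-finite categories with split idempotents. Second, through the Serre duality isomorphism $\homs{}(X, \Sigma^{2d}Y) \cong D\homs{}(Y,X)$ it symmetrises the higher extension groups controlling the syzygies in $(\ast)$, which is what allows the inductive comparison across the syzygy reduction to close with the correct signs. Granting well-definedness and additivity, $\theta$ is a homomorphism $K_0(\textrm{mod}\,\Gamma_T) \to \splitG{}(\mathscr{T})$, and since every $(d+2)$-angle computes $[\textrm{Im}(F_T\gamma)]$ through $(\ast)$, the identity $\sum_{i=0}^{d+1}(-1)^i \textrm{Ind}_{\mathscr{T}}(s_i) = \theta([\textrm{Im}(F_T\gamma)])$ holds in general, completing the proof.
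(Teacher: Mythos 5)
First, a point of comparison: the paper does not prove this statement. Theorem \ref{thmC} is quoted verbatim from \cite[Theorem~C]{JustJorg} and used as a black box, so there is no internal proof to measure your attempt against; what follows assesses your argument on its own terms.

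Your setup is sound: the exact sequence $(\ast)$ obtained by applying $F_T$ to a $(d+2)$-angle is correct, and realising an arbitrary $M\in\textrm{mod}\,\Gamma_T$ as $\textrm{Im}(F_T\gamma)$ by lifting a projective presentation along $\mathscr{T}\simeq\textrm{proj}\,\Gamma_T$, completing via (N1) and rotating, is a legitimate way to see that every class in $K_0(\textrm{mod}\,\Gamma_T)$ is hit. But the proof has a genuine gap at exactly the point you flag and then defer: well-definedness and additivity of $\theta$ are the \emph{entire} content of the theorem, and the argument ends with ``granting well-definedness and additivity'' rather than establishing either. The difficulties are concrete. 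For $d\ge 2$ the completion of a morphism to a $(d+2)$-angle is not unique up to isomorphism (summing with a rotation of the trivial angle inserts a contractible piece $X\xrightarrow{\mathrm{id}}X$ into two consecutive interior positions, by (N1)), so even the dependence of your formula on the chosen completion needs an argument; two angles realising isomorphic modules need not be connected by any morphism of angles; and (N3) only produces a morphism of angles once a commuting square on the first two terms is already supplied --- it does not lift an arbitrary map of images. The proposed ``telescoping'' of index contributions along a morphism of angles presupposes an additivity of $\textrm{Ind}_{\mathscr{T}}$ across $(d+2)$-angles which is essentially the identity being proved, so the reduction is circular as stated. Finally, the role you assign to the $2d$-Calabi--Yau hypothesis (``symmetrises the higher extension groups \ldots\ with the correct signs'') is a hope rather than an argument; the hypothesis actually enters through the equivalence $\mathscr{C}/[\Sigma^d\mathscr{T}]\simeq\mathscr{D}$ with $\mathscr{D}$ a $d$-cluster tilting subcategory of $\textrm{mod}\,\Gamma_T$ recalled just before the theorem in Section 2, which is the tool that controls resolutions of arbitrary modules by objects coming from $\mathscr{C}$. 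As it stands the proposal is a plausible plan that reduces the theorem to its two hardest assertions without proving them.
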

\section{Tropical Duality}
\subsection{Proof of the duality}
Firstly, we would like to extend our definition of index. The split Grothendieck group of $\mathscr{C}$ can be defined in the same way as for $\mathscr{T}$. Then we may define a homomorphism
\begin{align*}
\indT{}: \splitG{}(\mathscr{C}) \to \splitG{}(\mathscr{T})
\end{align*}
by
\begin{align*}
\indT{}([c]) := \indT{}(c)
\end{align*}
for all $c \in \mathscr{C}$. We also note that the translation functor maps the split Grothendieck group of $\mathscr{C}$ to itself, in the following way:
\begin{align*}
\Sigma^d : &\splitG{}(\mathscr{C}) \to \splitG{}(\mathscr{C}). \\
&[x] \to [\Sigma^d x]
\end{align*}

We now prove Theorem \ref{ThmA}, which we restate here:
\begin{theorem}\label{tropDual}
Let $\mathscr{C}$, $T$, and $U$ be as in Definition \ref{boilerplateDef}. Then there are inverse isomorphisms
\begin{center}

\end{center}
\end{theorem}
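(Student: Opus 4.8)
The plan is to show that the two displayed homomorphisms are mutually inverse by evaluating both composites on the bases of indecomposables $\{[t] : t \in \indec{}(\mathscr{T})\}$ and $\{[u] : u \in \indec{}(\mathscr{U})\}$. Write $\Phi = (-1)^d\,\indU{} \circ \Sigma^d|_{\splitG{}(\mathscr{T})}$ and $\Psi = \indT{}|_{\splitG{}(\mathscr{U})}$; both are group homomorphisms because $\indT{}$, $\indU{}$ and $\Sigma^d$ are. The engine of the argument is a single computation I will call the shift lemma: for every $t \in \mathscr{T}$ one has $\indT{}(\Sigma^d t) = (-1)^d[t]$, and symmetrically $\indU{}(\Sigma^d u) = (-1)^d[u]$ for $u \in \mathscr{U}$. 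To prove it I would start from the trivial $(d+2)$-angle $t \xrightarrow{\mathrm{id}} t \to 0 \to \cdots \to 0 \to \Sigma^d t$ of (N1) and apply the left rotation (N2) once, producing the $(d+2)$-angle $t \to 0 \to \cdots \to 0 \to \Sigma^d t \xrightarrow{(-1)^d\mathrm{id}} \Sigma^d t$. Its first $d+1$ terms are $t, 0, \ldots, 0$, all in $\mathscr{T}$, so it is a legitimate $\mathscr{T}$-resolution of $\Sigma^d t$; reading the index off this resolution, the only surviving contribution is $t$ occupying the slot of sign $(-1)^d$, giving $\indT{}(\Sigma^d t) = (-1)^d[t]$. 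Well-definedness of the index makes this independent of the chosen resolution.

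For $\Psi \circ \Phi$ I would take $t \in \indec{}(\mathscr{T})$ and choose an Oppermann-Thomas $\mathscr{U}$-resolution $u_d \to \cdots \to u_0 \to \Sigma^d t \xrightarrow{\gamma} \Sigma^d u_d$, so that $\Phi([t]) = (-1)^d\sum_{i=0}^d(-1)^i[u_i]$ and hence $\Psi\Phi([t]) = (-1)^d\sum_{i=0}^d(-1)^i\,\indT{}(u_i)$. The key move is to feed this \emph{same} $(d+2)$-angle into Theorem \ref{thmC} taken with respect to $\mathscr{T}$. The connecting morphism $\gamma$ has domain $\Sigma^d t$, and $F_T(\Sigma^d t) = \homs{}(T, \Sigma^d t) = 0$ by the cluster tilting vanishing $\homs{}(\mathscr{T}, \Sigma^d\mathscr{T}) = 0$; therefore $\theta([\textrm{Im}\,F_T\gamma]) = 0$ and the alternating-sum identity of Theorem \ref{thmC} collapses to $\sum_{i=0}^d(-1)^i\,\indT{}(u_i) = \indT{}(\Sigma^d t)$. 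Combining this with the shift lemma yields $\Psi\Phi([t]) = (-1)^d\,\indT{}(\Sigma^d t) = (-1)^{2d}[t] = [t]$.

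The composite $\Phi \circ \Psi$ is handled by the mirror argument. For $u \in \indec{}(\mathscr{U})$ I would choose a $\mathscr{T}$-resolution $t_d \to \cdots \to t_0 \to u \xrightarrow{\rho} \Sigma^d t_d$, so that $\Psi([u]) = \sum_{i=0}^d(-1)^i[t_i]$, then apply the automorphism $\Sigma^d$ to obtain a $(d+2)$-angle with terms $\Sigma^d t_i$ resolving $\Sigma^d u$, and feed it into Theorem \ref{thmC} with respect to $\mathscr{U}$. Now the connecting morphism has domain $\Sigma^d u$ with $F_U(\Sigma^d u) = \homs{}(U, \Sigma^d u) = 0$, so the correction term vanishes again and $\sum_{i=0}^d(-1)^i\,\indU{}(\Sigma^d t_i) = \indU{}(\Sigma^d u) = (-1)^d[u]$. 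Hence $\Phi\Psi([u]) = (-1)^d\sum_{i=0}^d(-1)^i\,\indU{}(\Sigma^d t_i) = [u]$. As both composites restrict to the identity on a basis, $\Phi$ and $\Psi$ are inverse isomorphisms.

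The step I expect to be the main obstacle is precisely the pairing of each resolution with the correct instance of Theorem \ref{thmC}, arranged so that the source of the connecting morphism is a shifted cluster tilting object and the term $\theta([\textrm{Im}\,F_T\gamma])$ is forced to vanish through $\homs{}(\mathscr{T},\Sigma^d\mathscr{T}) = 0$. Everything else is sign bookkeeping between the index convention and the GKO indexing of $(d+2)$-angles, together with the routine verification that $\Sigma^d$ carries $(d+2)$-angles to $(d+2)$-angles; these present no conceptual difficulty but must be tracked carefully so that the two factors of $(-1)^d$ combine correctly.
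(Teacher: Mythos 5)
Your argument is correct and is essentially the paper's own proof: both composites are evaluated on indecomposables by feeding the defining Oppermann--Thomas resolution $(d+2)$-angles into Theorem \ref{thmC} and killing the correction term $\theta([\mathrm{Im}\,F\gamma])$ via the vanishing $\homs{}(\mathscr{T},\Sigma^d\mathscr{T})=0$, respectively $\homs{}(\mathscr{U},\Sigma^d\mathscr{U})=0$. The only cosmetic difference is bookkeeping: the paper rotates each angle so that the unshifted cluster-tilting object sits in the distinguished slot (making its index trivially $[u]$ or $[t]$), whereas you apply $\Sigma^d$ to the resolution and absorb the resulting sign through your correctly verified shift identity $\indT{}(\Sigma^d t)=(-1)^d[t]$.
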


\begin{proof}
Let $u \in \mathscr{U}$ be given. By Definition \ref{OTDef}, there is a $(d+2)$-angle
\begin{align*}
t_d \to t_{d-1} \to \ldots \to t_0 \to u \to \Sigma^dt_d
\end{align*}
with each $t_i \in \mathscr{T}$. Then $\indT{}([u]) = \Sigma_{i=0}^d (-1)^i[t_i]$, and so
\begin{align}\label{eqn1}
\Sigma^d \circ \indT{}([u]) = \Sigma_{i=0}^d (-1)^i[\Sigma^dt_i].
\end{align}\\

By rotating this $(d+2)$-angle, we also have the $(d+2)$-angle
\begin{align*}
u \to \Sigma^d t_d \to \ldots \to \Sigma^d t_0 \xrightarrow{\delta} \Sigma^d u.
\end{align*}

By Theorem \ref{thmC} we have that 
\begin{align*}
(-1)^{d+1}\indU{}([u]) + \Sigma_{i=0}^d (-1)^i\indU{}([\Sigma^d t_i]) = \theta([\textrm{Im }F_U(\delta)]).
\end{align*}
Notice that $F_U(\delta)$ is a map from $U$ to $\Sigma^d u$, which by the definition of Oppermann-Thomas cluster tilting objects (definition \ref{OTDef}) is zero. We obtain that 
\begin{align*}
(-1)^{d+1}\indU{}([u]) = - \Sigma_{i=0}^d (-1)^i\indU{}([\Sigma^d t_i])
\end{align*}
which gives us that
\begin{align}\label{eqn2}
\indU{}([u]) = (-1)^d \Sigma_{i=0}^d (-1)^i\indU{}([\Sigma^d t_i])
\end{align}
as $(-1)^{d+2} = (-1)^d$. We have shown that 
\begin{align*}
[u] &= \indU{}([u]) \\
&\stackrel{\mathclap{\normalfont\mbox{(\ref{eqn2})}}}{=}(-1)^d \Sigma_{i=0}^d (-1)^i\indU{}([\Sigma^d t_i]) \\
&= (-1)^d \indU{}(\Sigma_{i=0}^d (-1)^i[\Sigma^d t_i]) \\
&\stackrel{\mathclap{\normalfont\mbox{(\ref{eqn1})}}}{=} (-1)^d \indU{}(\Sigma^d \circ \indT{}([u])) \\
&= (-1)^d \indU{}\circ \Sigma^d \circ \indT{}([u]).
\end{align*}

Proceeding in a similar fashion, let $t \in \mathscr{T}$ be given. Again by definition we have a $(d+2)$-angle
\begin{align*}
u_d \to u_{d-1} \to \ldots \to u_0 \to \Sigma ^d t \to \Sigma^du_d
\end{align*}
with each $u_i \in \mathscr{U}$. Then we have the $(d+2)$-angle
\begin{align*}
t \to u_d \to u_{d-1} \to \ldots \to u_0 \to \Sigma ^d t.
\end{align*}
The first $(d+2)$-angle gives us that
\begin{align*}
\indU{}([\Sigma^dt]) = \Sigma_{i=0}^d (-1)^i[u_i],
\end{align*}
and the second gives us that
\begin{align*}
\Sigma_{i=0}^d (-1)^i\indT{}(u_i) + (-1)^{d+1}\indT(t) = 0
\end{align*}
by Theorem \ref{thmC}. This means that
\begin{align*}
\indT{}(\indU{}([\Sigma^dt])) + (-1)^{d+1}\indT{}(t) = 0.
\end{align*}
Written another way this is
\begin{align*}
[t] = \indT{}([t]) = (-1)^d \indT{} \circ \indU \circ \Sigma^d ([t])
\end{align*}
as required.
%
\end{proof}
\subsection{Immediate Consequences}
We see that Theorem \ref{rmkB} follows immediately from Theorem \ref{tropDual}.
We also have the following immediate consequence of Theorem \ref{tropDual}:

\begin{theorem}\label{dknew}
Let $\mathscr{C}$, $T$, and $U$ be as in Definition \ref{boilerplateDef}. Then
\begin{align*}
\splitG{}(\mathscr{T}) = \bigoplus_{u \in \indec{}(\mathscr{U})} \mathbb{Z} \cdot \indT{}(u).
\end{align*}
\end{theorem}
\section{Categorical $c$-vectors}
\subsection{Using Tropical Duality with Categorical $c$-vectors}
We may use Theorem \ref{tropDual} to show two formulae for the computation of $c$-vectors.
\begin{theorem}\label{lemmap1}
Let $\mathscr{C}$, $T$, and $U$ be as in Definition \ref{boilerplateDef}. Then the $c$-vector of the pair $(u, \mathscr{U})$ with respect to the Oppermann-Thomas cluster tilting subcategory $\mathscr{T}$ is given by
\begin{align*}
\cvecU{} = (-1)^d [u]^* \circ \indU{} \circ \Sigma^d |_{\splitG{}(\mathscr{T})}.
\end{align*}
\end{theorem}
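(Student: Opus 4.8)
The plan is to use that both sides of the claimed identity are elements of $\splitG{}(\mathscr{T})^*$ and to verify their equality by evaluating them on a convenient basis of $\splitG{}(\mathscr{T})$. By Theorem \ref{dknew}, the set $\{\indT{}(v) \mid v \in \indec{}(\mathscr{U})\}$ is precisely such a basis, so it suffices to show that the two functionals agree on each $\indT{}(v)$.

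For the left-hand side this is immediate from the definition of the homological $c$-vector: one has $\cvecU{}(\indT{}(v)) = \delta_{uv}$ for every $v \in \indec{}(\mathscr{U})$, since $\cvecU{}$ was defined to be the functional dual to the $g$-vectors $\indT{}(v)$.

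For the right-hand side I would evaluate $(-1)^d [u]^* \circ \indU{} \circ \Sigma^d$ at the basis element $\indT{}(v)$. Writing $\indT{}(v) = \indT{}([v])$ and pulling the scalar $(-1)^d$ through the homomorphism $[u]^*$, the relevant inner composite becomes $(-1)^d\,\indU{} \circ \Sigma^d \circ \indT{}([v])$. This is exactly the composite established in the proof of Theorem \ref{tropDual}: there it is shown that $[v] = (-1)^d\,\indU{} \circ \Sigma^d \circ \indT{}([v])$ for every $v \in \mathscr{U}$, i.e.\ that $(-1)^d\,\indU{} \circ \Sigma^d$ and $\indT{}$ are mutually inverse. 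Hence the inner composite returns $[v]$, and applying $[u]^*$ yields $[u]^*([v]) = \delta_{uv}$.

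Since both functionals take the value $\delta_{uv}$ on $\indT{}(v)$ for every $v \in \indec{}(\mathscr{U})$, and these elements form a basis of $\splitG{}(\mathscr{T})$, the two functionals coincide, which proves the formula. The only point requiring care is the bookkeeping of domains and codomains of the three maps in the composite --- in particular, reading $\Sigma^d|_{\splitG{}(\mathscr{T})}$ as landing in $\splitG{}(\mathscr{C})$ before $\indU{}$ is applied, and applying the inverse-isomorphism relation from Theorem \ref{tropDual} in the correct direction. Beyond this, the argument is a direct dualisation of tropical duality and presents no substantial obstacle.
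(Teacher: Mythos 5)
Your proposal is correct and follows essentially the same route as the paper: both evaluate the two functionals on the basis $\{\indT{}(v) \mid v \in \indec{}(\mathscr{U})\}$ of $\splitG{}(\mathscr{T})$ furnished by Theorem \ref{dknew}, and both use the inverse-isomorphism relation of Theorem \ref{tropDual} to reduce the right-hand side to $[u]^*([v]) = \delta_{uv}$. The paper's proof is just a more compressed version of the same computation.
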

\begin{proof}
Let $v \in \indec{}(\mathscr{U})$ be given. Then
\begin{align*}
(-1)^d[u]^* \circ \indU{} \circ \Sigma^d(\indT{}(v)) &= [u]^* \circ (-1)^d \indU{} \circ \Sigma^d \circ \indT{}(v) \\
&= [u]^*(v) \textrm{ by Theorem 3.1} \\
&= \delta_{uv} \\
&= \cvecU{}(\indT{}(v)).
\end{align*}
\end{proof}

\begin{lemma}\label{simpleLemma}
Let $\mathscr{C}$, $T$, and $U$ be as in Definition \ref{boilerplateDef}. Suppose that $U$ is mutable at $u$ with the mutation $u^*$ such that $u$ and $u^*$ form an exchange pair. Then $F_U(\Sigma^d u^*)$ is simple in the abelian category $\textrm{mod }\Gamma_U$.
\end{lemma}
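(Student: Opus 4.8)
The plan is to reduce the statement to a dimension count. The key observation is that any nonzero module of $K$-dimension $1$ over a $K$-algebra is automatically simple: its submodules are in particular $K$-subspaces, and a one-dimensional space has only $0$ and itself as subspaces. So it suffices to show that $F_U(\Sigma^d u^*)$ is one-dimensional over $K$, and then simplicity is a formal consequence.

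First I would unpack the functor. By definition $F_U(\Sigma^d u^*) = \homs{}(U, \Sigma^d u^*)$, and since $U \cong \bigoplus_{i=1}^m u_i$ with $\{u_1, \ldots, u_m\} = \indec{}(\mathscr{U})$, this splits as a $K$-vector space into $\bigoplus_{i=1}^m \homs{}(u_i, \Sigma^d u^*)$. I would note explicitly that this is only a vector-space decomposition (the right $\Gamma_U$-action by precomposition need not respect the summands), which is harmless for a dimension computation. Writing $u = u_j$, I would then evaluate each summand separately. For the common summands $u_i$ with $i \neq j$, the mutability hypothesis gives that $U' = \big(\bigoplus_{i \neq j} u_i\big) \oplus u^*$ is again an Oppermann–Thomas cluster tilting object; setting $\mathscr{U}' = \add{}(U')$, condition (i) of Definition \ref{OTDef} yields $\homs{}(\mathscr{U}', \Sigma^d \mathscr{U}') = 0$, and since both $u_i$ and $u^*$ are summands of $U'$ we get $\homs{}(u_i, \Sigma^d u^*) = 0$. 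For the remaining summand $u_j = u$, we have $\homs{}(u, \Sigma^d u^*) = \textrm{Ext}^d(u, u^*)$, which has dimension $1$ over $K$ by the exchange-pair hypothesis of Definition \ref{exchangePair}. Summing the contributions gives $\dim_K F_U(\Sigma^d u^*) = 1$, and the result follows.

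The step requiring the most care is the bookkeeping in the dimension count rather than any deep argument: one must apply the vanishing condition (i) to the \emph{mutated} category $\mathscr{U}'$ and not to $\mathscr{U}$ itself, since $u^* \notin \mathscr{U}$, and one must match the identification $\textrm{Ext}^d(u, u^*) = \homs{}(u, \Sigma^d u^*)$ with the dimension-$1$ clause of the exchange-pair definition (the one-dimensional space spanned by the morphism $\delta$ of equation (\ref{exchangeOne})). Beyond confirming these identifications, there is no genuine obstacle; the simplicity of $F_U(\Sigma^d u^*)$ is forced once its $K$-dimension is shown to equal $1$.
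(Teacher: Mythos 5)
Your proof is correct and follows essentially the same route as the paper: reduce to showing $\dim_K \homs{}(U, \Sigma^d u^*) = 1$, kill the summands $\homs{}(u_i, \Sigma^d u^*)$ for $u_i \neq u$, and invoke the one-dimensionality of $\textrm{Ext}^d(u, u^*)$ from the exchange-pair hypothesis, after which simplicity is automatic. If anything you are more precise than the paper, whose proof cites Definition \ref{OTDef}(i) for the vanishing without spelling out that it must be applied to the mutated cluster tilting object containing both $u_i$ and $u^*$, since $u^* \notin \mathscr{U}$.
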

\begin{proof}
We have
\begin{align*}
\homs{}(U, \Sigma^d u^*) = \homs{}(u, \Sigma^d u^*)
\end{align*}
by definition \ref{OTDef}(i). By the assumption that $u$ and $u^*$ form an exchange pair, the space $\homs{}(u, \Sigma^d u^*)$ is one dimensional. This means that in $\textrm{mod }\Gamma_U$, the object $\homs{}(U, \Sigma^d u^*) = F_U(x)$ is one dimensional. This gives us the simplicity of the object.
\end{proof}

We may immediately use this simplicity to prove another lemma:
\begin{lemma}\label{zeroLemma}
Let $\mathscr{C}$, $T$, and $U$ be as in Definition \ref{boilerplateDef}. Suppose that $U$ is mutable at $u$ with the mutation $u^*$ such that $u$ and $u^*$ form an exchange pair. Let $t, t'$ be (not necessarily distinct) indecomposable objects of $\mathscr{T}$. Then for any $n \in \mathbb{Z}$, at least one of the homomorphism spaces
\begin{align*}
\tilthomsU{}(\Sigma^{nd} t, \Sigma^d u^*)
\end{align*}
and
\begin{align*}
\tilthomsU{}(\Sigma^d u^*, \Sigma^{(n+1)d} t')
\end{align*}
is zero.
\end{lemma}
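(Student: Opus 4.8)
The plan is to transport the entire question into the module category $\textrm{mod }\Gamma_U$ and then exploit simplicity. By the commutative diagram coming from \cite[Theorem~0.5]{JorgJac} we have $\tilthomsU{}(x,y) \cong \textrm{Hom}_{\Gamma_U}(F_U(x), F_U(y))$ for all $x, y \in \mathscr{C}$. Writing $A = F_U(\Sigma^{nd}t)$, $S = F_U(\Sigma^d u^*)$ and $B = F_U(\Sigma^{(n+1)d}t')$, the two spaces in the statement become $\textrm{Hom}_{\Gamma_U}(A, S)$ and $\textrm{Hom}_{\Gamma_U}(S, B)$, and by Lemma \ref{simpleLemma} the module $S$ is simple.

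I would then argue by contradiction, supposing both spaces are nonzero and choosing nonzero maps $f \colon A \to S$ and $g \colon S \to B$. Because $S$ is simple, $\textrm{Im}(f)$ is a nonzero submodule of $S$ and $\ker(g)$ is a proper submodule of $S$; hence $f$ is surjective and $g$ is injective. Consequently $g \circ f \colon A \to B$ has image $g(S) \cong S \neq 0$, so $g \circ f$ is a nonzero element of $\textrm{Hom}_{\Gamma_U}(A, B)$.

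The contradiction then comes from showing that this target Hom-space is in fact zero. Using the isomorphism above once more, $\textrm{Hom}_{\Gamma_U}(A, B) \cong \tilthomsU{}(\Sigma^{nd}t, \Sigma^{(n+1)d}t')$, and the latter is a quotient of $\homs{}(\Sigma^{nd}t, \Sigma^{(n+1)d}t')$ since passing to the additive quotient by the ideal $[\Sigma^d \mathscr{U}]$ only kills morphisms. Applying the autoequivalence $\Sigma^{-nd}$ identifies this $\mathscr{C}$-Hom with $\homs{}(t, \Sigma^d t')$, which vanishes by Definition \ref{OTDef}(i) because $t$ and $t'$ both lie in $\mathscr{T}$. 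Hence $\textrm{Hom}_{\Gamma_U}(A, B) = 0$, forcing $g \circ f = 0$, contradicting $g\circ f \neq 0$. This shows that $\textrm{Hom}_{\Gamma_U}(A,S)$ and $\textrm{Hom}_{\Gamma_U}(S,B)$ cannot both be nonzero, which is the claim.

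The proof is short once the module-theoretic translation is in place, and the single point that genuinely drives it is the interplay between simplicity and the cluster tilting vanishing: simplicity of $S$ is exactly what upgrades \emph{both maps nonzero} to \emph{the composite is nonzero}, and the consecutive shifts $\Sigma^{nd}$ and $\Sigma^{(n+1)d}$ are tuned so that the composite lands in $\homs{}(t, \Sigma^d t') = 0$. The main thing to be careful about is precisely this arithmetic of shifts, namely that the gap between the two objects is exactly one power of $\Sigma^d$ so that Definition \ref{OTDef}(i) applies, together with the fact that $\Sigma^{-nd}$ is an autoequivalence, so that the vanishing holds uniformly for every $n \in \mathbb{Z}$.
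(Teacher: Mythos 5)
Your argument is correct and follows essentially the same route as the paper: transport both Hom-spaces into $\textrm{mod }\Gamma_U$ via $F_U$, use the simplicity of $F_U(\Sigma^d u^*)$ from Lemma \ref{simpleLemma} to conclude that the composite of two nonzero maps is nonzero, and then derive a contradiction with $\homs{}(\mathscr{T}, \Sigma^d\mathscr{T}) = 0$. Your write-up is in fact slightly more explicit than the paper's at the one point that matters, namely spelling out via surjectivity and injectivity why simplicity forces the composite to be nonzero.
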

\begin{proof}
Suppose that there is a non-zero morphism in $\tilthomsU{}(\Sigma^{nd} t, \Sigma^d u^*)$, and a non-zero morphism in $\tilthomsU{}(\Sigma^d u^*, \Sigma^{(n+1)d} t')$. We recall that
\begin{align*}
\tilthomsU{}(\Sigma^{nd} t, \Sigma^d u^*) \cong \textrm{Hom}_{\textrm{mod }\Gamma_U}(F_U(\Sigma^{nd} t), F_U(\Sigma^d u^*)),
\end{align*}
and that
\begin{align*}
\tilthomsU{}(\Sigma^du^*, \Sigma^{(n+1)d}t') \cong \textrm{Hom}_{\textrm{mod }\Gamma_U}(F_U(\Sigma^du^*), F_U(\Sigma^{(n+1)d}t')).
\end{align*}
This means that there is a non-zero morphism in $\textrm{Hom}_{\textrm{mod }\Gamma_U}(F_U(\Sigma^{nd} t), F_U(\Sigma^d u^*))$, and a non-zero morphism in $\textrm{Hom}_{\textrm{mod }\Gamma_U}(F_U(\Sigma^du^*), F_U(\Sigma^{(n+1)d}t'))$. By lemma \ref{simpleLemma} the object $F_U(\Sigma^du^*)$ is simple, so these two morphisms must compose to a non-zero morphism from $F_U(\Sigma^{nd} t)$ to $F_U(\Sigma^{(n+1)d}t')$. Again by the above isomorphisms, this composed morphism means we have a non-zero morphism in $\frac{\mathscr{C}}{[\Sigma^d U]}$ from $\Sigma^{nd} t$ to $\Sigma^{(n+1)d} t'$, hence also a non-zero morphism in $\mathscr{C}$ from $\Sigma^{nd} t$ to $\Sigma^{(n+1)d} t'$. This is a contradiction of the fact that $T$ is an Oppermann-Thomas cluster tilting object. This proves the lemma.
\end{proof}

\begin{lemma}\label{myLemma}
Let $\mathscr{C}$, $T$, and $U$ be as in Definition \ref{boilerplateDef}. Suppose also that $d$ is odd, and that $U$ is mutable at $u$ with the mutation $u^*$ such that $u$ and $u^*$ form an exchange pair. Then for $t \in \mathscr{T}$,
\begin{align*}
\cvecU{}([t])= (-1)^d[\textrm{dim}_K\tilthomsU{}(\Sigma^d t, \Sigma^d u^*) + (-1)^d\textrm{dim}_K\tilthomsU{}(\Sigma^du^*, \Sigma^{2d}t)].
\end{align*}
Moreover, at least one of $\tilthomsU{}(\Sigma^d t, \Sigma^d u^*)$ and $\tilthomsU{}(\Sigma^du^*, \Sigma^{2d}t)$ is zero.
\end{lemma}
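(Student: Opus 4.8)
\emph{Proof proposal.} The plan is to express the $c$-vector as a single coefficient of an index, translate that coefficient into the module category $\textrm{mod }\Gamma_U$, and then collapse the resulting alternating sum using the $d$-cluster tilting property together with Calabi--Yau duality.

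The final assertion --- that at least one of $\tilthomsU{}(\Sigma^d t, \Sigma^d u^*)$ and $\tilthomsU{}(\Sigma^d u^*, \Sigma^{2d} t)$ is zero --- is exactly Lemma \ref{zeroLemma} with $n = 1$ and $t' = t$, so I would dispose of it first. For the formula, I would start from Theorem \ref{lemmap1}, which gives $\cvecU{}([t]) = (-1)^d\,[u]^*(\indU{}(\Sigma^d t))$; it therefore suffices to compute the $[u]$-coefficient of $\indU{}(\Sigma^d t)$. Fixing a minimal $\mathscr{U}$-resolution $u_d \to \cdots \to u_0 \to \Sigma^d t \to \Sigma^d u_d$ gives $\indU{}(\Sigma^d t) = \sum_{i=0}^d (-1)^i [u_i]$, so $[u]^*(\indU{}(\Sigma^d t)) = \sum_{i=0}^d (-1)^i [u_i : u]$, where $[u_i : u]$ is the multiplicity of $u$ as a summand of $u_i$.

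Set $M = F_U(\Sigma^d t)$ and $S = F_U(\Sigma^d u^*)$. By Lemma \ref{simpleLemma} the module $S$ is simple, and since $\homs{}(U, \Sigma^d u^*) = \homs{}(u, \Sigma^d u^*)$ it is supported at the vertex $u$, i.e. $S$ is the top of the indecomposable projective $F_U(u)$. As $F_U$ sends $\mathscr{U}$ to the projective $\Gamma_U$-modules, $[u_i : u] = \dim_K \textrm{Hom}_{\Gamma_U}(F_U(u_i), S)$. Applying $F_U$ to the resolution produces a projective resolution $0 \to F_U(u_d) \to \cdots \to F_U(u_0) \to M \to 0$, whence $[u]^*(\indU{}(\Sigma^d t)) = \sum_{i=0}^d (-1)^i \dim_K \textrm{Ext}^i_{\Gamma_U}(M, S)$. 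Because $M$ and $S$ both lie in the $d$-cluster tilting subcategory $\mathscr{D} \cong \mathscr{C}/[\Sigma^d \mathscr{U}]$, the groups $\textrm{Ext}^i_{\Gamma_U}(M, S)$ vanish for $0 < i < d$, so the sum collapses to $\dim_K \textrm{Hom}_{\Gamma_U}(M, S) + (-1)^d \dim_K \textrm{Ext}^d_{\Gamma_U}(M, S)$. Under the isomorphism $\tilthomsU{}(x,y) \cong \textrm{Hom}_{\Gamma_U}(F_U(x), F_U(y))$ the first summand is $\dim_K \tilthomsU{}(\Sigma^d t, \Sigma^d u^*)$, and combining the two pieces with the factor $(-1)^d$ from Theorem \ref{lemmap1} reproduces the stated formula --- provided one knows $\dim_K \textrm{Ext}^d_{\Gamma_U}(M, S) = \dim_K \tilthomsU{}(\Sigma^d u^*, \Sigma^{2d} t)$.

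This last identity is the main obstacle: it is a higher Auslander--Reiten / Calabi--Yau duality, which I would obtain by transporting the $2d$-Calabi-Yau structure of $\mathscr{C}$ through $F_U$. Concretely, I expect a natural isomorphism $\textrm{Ext}^d_{\Gamma_U}(F_U(X), F_U(Y)) \cong \tilthomsU{}(X, \Sigma^d Y)$ together with a descended Serre duality $\tilthomsU{}(\Sigma^d t, \Sigma^{2d} u^*) \cong D\,\tilthomsU{}(\Sigma^d u^*, \Sigma^{2d} t)$; their composite yields the required equality of dimensions. Pinning down how Serre duality descends to the quotient $\mathscr{C}/[\Sigma^d \mathscr{U}]$ and tracking the resulting signs --- the point at which the hypothesis that $d$ is odd enters --- is the delicate part, and I anticipate that the general-$n$ form of Lemma \ref{zeroLemma} is what forces the vanishing of the cross terms in the long exact sequence underlying this duality. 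A smaller technical point is to check that $F_U$ really carries a minimal $\mathscr{U}$-resolution to a genuine projective resolution of length at most $d$; this follows from the long exact sequence attached to the $(d+2)$-angle and the rigidity $\homs{}(\mathscr{U}, \Sigma^d \mathscr{U}) = 0$.
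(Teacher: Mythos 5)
Your skeleton is the same as the paper's: reduce via Theorem \ref{lemmap1} to the $[u]$-coefficient of $\indU{}(\Sigma^d t)$, identify that coefficient with $\sum_{i=0}^d(-1)^i\dim_K\homs{}(u_i,\Sigma^d u^*)$ using the fact that $\homs{}(-,\Sigma^d u^*)$ is one-dimensional on $u$ and vanishes on the other indecomposables of $\mathscr{U}$ (your reformulation via the simple module $S=F_U(\Sigma^d u^*)$ and Lemma \ref{simpleLemma} is equivalent to the paper's $\beta_i$ computation), and dispose of the final claim by Lemma \ref{zeroLemma} with $n=1$, $t'=t$. The divergence is at the decisive step: the paper converts the alternating sum into $\dim_K\tilthomsU{}(\Sigma^d t,\Sigma^d u^*)+(-1)^d\dim_K\tilthomsU{}(\Sigma^d u^*,\Sigma^{2d}t)$ by citing an external result (\cite[Proposition~3.1]{Reid}), whereas you attempt to re-derive that result inside $\textrm{mod }\Gamma_U$. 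You have correctly located where the real content sits, but your re-derivation is incomplete in two places.

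First, the identity $\dim_K\textrm{Ext}^d_{\Gamma_U}(M,S)=\dim_K\tilthomsU{}(\Sigma^d u^*,\Sigma^{2d}t)$, which you yourself call ``the main obstacle'', is precisely the nontrivial part of the step: you state what you \emph{expect} (an isomorphism $\textrm{Ext}^d_{\Gamma_U}(F_UX,F_UY)\cong\tilthomsU{}(X,\Sigma^d Y)$ composed with a Serre duality descended to $\frac{\mathscr{C}}{[\Sigma^d\mathscr{U}]}$) but prove neither, and the deferred sign analysis is exactly where the parity of $d$ must enter. As written this is an announced strategy, not a proof. Second, your ``smaller technical point'' that $0\to F_U(u_d)\to\cdots\to F_U(u_0)\to M\to 0$ is exact does not follow from rigidity alone. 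Rigidity gives $\homs{}(U,\Sigma^d u_d)=0$, hence surjectivity onto $M$ and exactness in the middle; but exactness at $F_U(u_d)$ requires the map $\homs{}(U,t)\to\homs{}(U,u_d)$ induced by the connecting morphism of the rotated angle $t\to u_d\to\cdots\to u_0\to\Sigma^d t$ to vanish, which is not automatic (already for $d=1$ the corresponding module over a cluster-tilted algebra generally has projective dimension greater than $1$, so the analogous two-term sequence is only a presentation, not a resolution). Without this, the Euler-characteristic identity $\sum_i(-1)^i[u_i:u]=\sum_i(-1)^i\dim_K\textrm{Ext}^i_{\Gamma_U}(M,S)$ is unjustified, since the leftmost cohomology of the complex need not compute $\textrm{Ext}^d_{\Gamma_U}(M,S)$. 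The intermediate vanishing $\textrm{Ext}^i_{\Gamma_U}(M,S)=0$ for $0<i<d$ from the $d$-cluster tilting property of $\mathscr{D}$ is fine.
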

\begin{proof}
By the definition of an Oppermann-Thomas cluster tilting object, there is a $(d+2)$-angle
\begin{align*}
u_d \to u_{d-1} \to \ldots \to u_0 \to \Sigma^d t \to \Sigma^d u_d,
\end{align*}
with each $u_i \in \mathscr{U}$. Then $\indU{}(\Sigma^dt) = \Sigma_{i=0}^d(-1)^i[u_i]$. For each $u_i$, we see that $u_i = u^{\beta_i} \oplus \tilde{u}_i$, where $u$ is not a direct summand of $\tilde{u}_i$. Then we have that $[u]^*([u_i])$ is equal to the number of copies of $u$ in this sum; that is $[u]^*([u_i]) = \beta_i$. We also have that
\begin{align*}
\textrm{dim}_K\homs{}(u_i, \Sigma^d u^*) &=\textrm{dim}_K\homs{}(u^{\beta_i} \oplus \tilde{u}_i, \Sigma^d u^*) \\
&= \textrm{dim}_K\homs{}(u^{\beta_i}, \Sigma^d u^*) \\
&=\beta_i * \textrm{dim}_K\homs{}(u, \Sigma^d u^*) \\
&=\beta_i,
\end{align*}
because for each indecomposable $u_\alpha$ in $\mathscr{U}$ not equal to $u$ we have that $\homs{}(u_\alpha, \Sigma^d u^*) = 0$ and by definition \ref{exchangePair} we have that $\textrm{dim}_K\homs{}(u, \Sigma^d u^*) = 1$. This means we have that $[u]^*([u_i]) = \textrm{dim}_K\homs{}(u_i, \Sigma^d u^*)$.

We apply $\cvecU{}$ to $t$ and obtain
\begin{align*}
\cvecU{}([t]) &= (-1)^d [u]^* \circ \indU{} \circ \Sigma^d ([t]) \textrm{ by Lemma \ref{lemmap1}} \\
&= (-1)^d[u]^* \circ \indU{} ([\Sigma^d t]) \\
&= [u]^* ( (-1)^d \Sigma_{i=0}^d(-1)^i[u_i]) \\
&= (-1)^d \Sigma_{i=0}^d(-1)^i[u]^*([u_i]) \\
&= (-1)^d \Sigma_{i=0}^d(-1)^i\textrm{dim}_K\homs{}(u_i, \Sigma^d u^*) \\
&= (-1)^d[\textrm{dim}_K\tilthomsU{}(\Sigma^d t, \Sigma^d u^*) + (-1)^d\textrm{dim}_K\tilthomsU{}(\Sigma^du^*, \Sigma^{2d}t)].
\end{align*}
The last step here is by \cite[Proposition~3.1]{Reid}. \\

By lemma \ref{zeroLemma} at least one of these hom-spaces is zero, and we have proven the lemma.
\end{proof}

Lemma \ref{myLemma} allows us to show a sign coherence property of the $c$-vector:
\begin{lemma}\label{coherenceLemma}
Let $\mathscr{C}$, $T$, and $U$ be as in Definition \ref{boilerplateDef}. Suppose also that $d$ is odd, and that $U$ is mutable at $u$ with the mutation $u^*$ such that $u$ and $u^*$ form an exchange pair. Then either $\cvecU{}([t]) \geq 0$ for all $t \in \mathscr{T}$, or $\cvecU{}([t]) \leq 0$ for all $t \in \mathscr{T}$.
\end{lemma}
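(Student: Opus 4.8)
The plan is to combine the explicit formula of Lemma \ref{myLemma} with the vanishing statement of Lemma \ref{zeroLemma}. First I would use that $d$ is odd, so that $(-1)^d = -1$, to rewrite the formula of Lemma \ref{myLemma} for an indecomposable $t \in \mathscr{T}$ as
\[
\cvecU{}([t]) = \textrm{dim}_K\tilthomsU{}(\Sigma^d u^*, \Sigma^{2d} t) - \textrm{dim}_K\tilthomsU{}(\Sigma^d t, \Sigma^d u^*).
\]
Writing $a_t := \textrm{dim}_K\tilthomsU{}(\Sigma^d t, \Sigma^d u^*)$ and $b_t := \textrm{dim}_K\tilthomsU{}(\Sigma^d u^*, \Sigma^{2d} t)$, both nonnegative integers, this reads $\cvecU{}([t]) = b_t - a_t$. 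The ``moreover'' clause of Lemma \ref{myLemma} says that for each fixed $t$ at least one of $a_t, b_t$ vanishes, so $\cvecU{}([t])$ equals either $b_t \geq 0$ or $-a_t \leq 0$. The sign of the $c$-vector on a single indecomposable is therefore already controlled; the only thing left is to rule out a sign change as $t$ varies.

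To establish this global coherence I would argue by contradiction. Suppose there exist indecomposables $t, t' \in \mathscr{T}$ with $\cvecU{}([t]) < 0$ and $\cvecU{}([t']) > 0$. By the previous paragraph the first forces $a_t > 0$, hence $\tilthomsU{}(\Sigma^d t, \Sigma^d u^*) \neq 0$, and the second forces $b_{t'} > 0$, hence $\tilthomsU{}(\Sigma^d u^*, \Sigma^{2d} t') \neq 0$. But these are precisely the two hom-spaces appearing in Lemma \ref{zeroLemma} at $n = 1$, which guarantees that at least one of them is zero; this is the desired contradiction.

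Equivalently, and perhaps more transparently, Lemma \ref{zeroLemma} at $n = 1$ states that if $a_t \neq 0$ for some $t$ then $b_{t'} = 0$ for every $t'$. Hence either some $a_t > 0$, in which case $b_{t'} = 0$ and $\cvecU{}([t']) = -a_{t'} \leq 0$ for all $t'$, or else $a_t = 0$ for every $t$ and $\cvecU{}([t]) = b_t \geq 0$ for all $t$. Passing from indecomposables to arbitrary $t \in \mathscr{T} = \textrm{add}(T)$ is immediate, since $\cvecU{}$ is additive and every object of $\mathscr{T}$ is a nonnegative sum of indecomposable summands of $T$. I do not anticipate a genuine obstacle: the substantive work is already packaged into Lemmas \ref{myLemma} and \ref{zeroLemma}, and the only point requiring attention is to recognise that Lemma \ref{zeroLemma} evaluated at the single index $n = 1$ is exactly what upgrades the per-object sign control to global sign coherence.
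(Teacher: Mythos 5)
Your proposal is correct and follows essentially the same route as the paper: both deduce from Lemma \ref{myLemma} (with $d$ odd) that a positive value forces $\tilthomsU{}(\Sigma^d u^*, \Sigma^{2d}t)\neq 0$ and a negative value forces $\tilthomsU{}(\Sigma^d t, \Sigma^d u^*)\neq 0$, and then derive a contradiction from Lemma \ref{zeroLemma} at $n=1$. Your closing remark on passing from indecomposables to arbitrary objects of $\mathscr{T}$ is a small point the paper leaves implicit, but it does not change the argument.
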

\begin{proof}
Suppose that there exist $t^1, t^2 \in \mathscr{T}$ such that $\cvecU{}([t^1]) > 0$ and $\cvecU{}([t^2]) < 0$. By the definition of an Oppermann-Thomas cluster tilting subcategory, there are two $(d+2)$-angles
\begin{align*}
u_d^1 \to u_{d-1}^1 \to \ldots \to u_0^1 \to \Sigma^d t^1 \to \Sigma^d u_d^1
\end{align*}
and
\begin{align*}
u_d^2 \to u_{d-1}^2 \to \ldots \to u_0^2 \to \Sigma^d t^2 \to \Sigma^d u_d^2,
\end{align*}
where each $u_i^j \in \mathscr{U}$.
This means that $\indU{}([\Sigma^d t^j]) = \Sigma_{i=0}^d(-1)^i[u_i^j]$. By Lemma \ref{myLemma}, we have that
\begin{align*}
\cvecU{}([t^j]) = (-1)^d[\textrm{dim}_K\tilthomsU{}(\Sigma^d t^j, \Sigma^d u^*) + (-1)^d\textrm{dim}_K\tilthomsU{}(\Sigma^du^*, \Sigma^{2d}t^j)],
\end{align*}
where at least one of the terms on the right is zero. Then as we have assumed $d$ is odd, and we know that the dimension of a space is always non-negative, we see that
\begin{align*}
\cvecU{}([t^1]) = \textrm{dim}_K\tilthomsU{}(\Sigma^du^*, \Sigma^{2d}t^1)
\end{align*}
and
\begin{align*}
\cvecU{}([t^2]) = -\textrm{dim}_K\tilthomsU{}(\Sigma^d t^2, \Sigma^d u^*).
\end{align*}
This immediately gives us a contradiction by lemma \ref{zeroLemma}, and as such our initial assumption must be false. This proves the lemma.
\end{proof}

\begin{lemma}\label{cokerLemma}
Let $\mathscr{C}$, $T$, and $U$ be as in Definition \ref{boilerplateDef}. Suppose that $d$ is odd, and suppose that $U$ is mutable at $u$ with the mutation $u^*$ such that $u$ and $u^*$ form an exchange pair. Let $\phi$ be the morphism from $\Sigma^d u^*$ to $\Sigma^d e_d$ which comes from rotating the exchange $(d+2)$-angle shown in equation (\ref{exchangeOne}). For any object $z \in \mathscr{C}$, the morphism $\phi$ induces the morphism $\phi^*: \homs{}(\Sigma^d e_d, z) \to \homs{}(\Sigma^d u^*, z)$. Then the cokernel of $\phi^*$ is $\tilthomsU{}(\Sigma^d u^*, z)$.
\end{lemma}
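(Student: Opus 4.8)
The plan is to left-rotate the exchange $(d+2)$-angle (\ref{exchangeOne}), apply the homological functor $\homs{}(-,z)$, and thereby reduce the claimed identification of cokernels to an equality of subgroups of $\homs{}(\Sigma^d u^*, z)$. Write $[\Sigma^d \mathscr{U}](\Sigma^d u^*, z)$ for the subgroup of $\homs{}(\Sigma^d u^*, z)$ consisting of the morphisms that factor through an object of $\add{}(\Sigma^d U)$, so that by definition of the quotient category $\tilthomsU{}(\Sigma^d u^*, z) = \homs{}(\Sigma^d u^*, z)/[\Sigma^d \mathscr{U}](\Sigma^d u^*, z)$. Since $\textrm{coker}(\phi^*) = \homs{}(\Sigma^d u^*, z)/\textrm{Im}(\phi^*)$, the whole lemma reduces to proving the single equality $\textrm{Im}(\phi^*) = [\Sigma^d \mathscr{U}](\Sigma^d u^*, z)$.

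First I would apply (N2) to left-rotate the angle $u^* \xrightarrow{\gamma^0} e_d \to \cdots \to e_1 \to u \xrightarrow{\delta} \Sigma^d u^*$, obtaining the $(d+2)$-angle $e_d \to \cdots \to e_1 \to u \xrightarrow{\delta} \Sigma^d u^* \xrightarrow{\phi} \Sigma^d e_d$ in which $\phi = (-1)^d \Sigma^d(\gamma^0)$ is precisely the morphism of the statement and $\delta \colon u \to \Sigma^d u^*$ is the map immediately preceding $\Sigma^d u^*$. Applying the contravariant functor $\homs{}(-,z)$, which is homological on $(d+2)$-angles \cite{GKO}, yields a long exact sequence of abelian groups whose relevant portion is $\homs{}(\Sigma^d e_d, z) \xrightarrow{\phi^*} \homs{}(\Sigma^d u^*, z) \xrightarrow{\delta^*} \homs{}(u, z)$. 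Exactness at the middle term gives $\textrm{Im}(\phi^*) = \ker(\delta^*)$, where $\delta^*$ denotes precomposition with $\delta$.

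I would then sandwich the three subgroups. The morphism $\phi$ factors through $\Sigma^d e_d$, and $e_d \in \mathscr{U}$ since each $e_i$ is a sum of indecomposable summands of $U$ by Definition \ref{exchangePair}; hence $\Sigma^d e_d \in \add{}(\Sigma^d U)$ and $\textrm{Im}(\phi^*) \subseteq [\Sigma^d \mathscr{U}](\Sigma^d u^*, z)$. Conversely, any $h \in [\Sigma^d \mathscr{U}](\Sigma^d u^*, z)$ factors as $h = b \circ a$ with $a \colon \Sigma^d u^* \to \Sigma^d w$ and $b \colon \Sigma^d w \to z$ for some $w \in \mathscr{U}$ (every object of $\add{}(\Sigma^d U)$ has this form, as $\Sigma^d$ is an autoequivalence); then $h \circ \delta = b \circ (a \circ \delta)$, and $a \circ \delta \colon u \to \Sigma^d w$ lies in $\homs{}(\mathscr{U}, \Sigma^d \mathscr{U}) = 0$ by Definition \ref{OTDef}(i) applied to $U$, so $h \circ \delta = 0$ and $h \in \ker(\delta^*)$. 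Thus $\textrm{Im}(\phi^*) \subseteq [\Sigma^d \mathscr{U}](\Sigma^d u^*, z) \subseteq \ker(\delta^*) = \textrm{Im}(\phi^*)$, forcing all three to coincide and proving the lemma.

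The step I expect to be the main obstacle is the careful bookkeeping needed to guarantee that the long exact sequence is exact exactly at $\homs{}(\Sigma^d u^*, z)$, with incoming map $\phi^*$ and outgoing map $\delta^*$; this depends on correctly indexing the rotated $(d+2)$-angle so that $\Sigma^d u^*$ occupies the penultimate slot. The remaining delicate point is ensuring the Oppermann-Thomas vanishing $\homs{}(u, \Sigma^d w) = 0$ can be invoked for an arbitrary factorization through $\add{}(\Sigma^d U)$, which is exactly why one must first rewrite a general object of $\add{}(\Sigma^d U)$ as $\Sigma^d w$ with $w \in \mathscr{U}$.
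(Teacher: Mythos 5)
Your proposal is correct and follows essentially the same route as the paper's own proof: rotate the exchange $(d+2)$-angle, apply $\homs{}(-,z)$ to obtain the exact fragment $\homs{}(\Sigma^d e_d, z) \xrightarrow{\phi^*} \homs{}(\Sigma^d u^*, z) \xrightarrow{\delta^*} \homs{}(u,z)$, and identify $\textrm{Im}(\phi^*) = \textrm{Ker}(\delta^*)$ with the subgroup of morphisms factoring through $\Sigma^d\mathscr{U}$ via the vanishing $\homs{}(\mathscr{U}, \Sigma^d\mathscr{U}) = 0$. Your sandwich $\textrm{Im}(\phi^*) \subseteq [\Sigma^d\mathscr{U}](\Sigma^d u^*, z) \subseteq \textrm{Ker}(\delta^*)$ is exactly the paper's two containments, stated slightly more explicitly.
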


\begin{proof}
We have the rotated $(d+2)$-angle
\begin{align*}
u \xrightarrow{\delta}\Sigma^d u^* \xrightarrow{\phi} \Sigma^d e_d \to \ldots \to \Sigma^d e_1 \to \Sigma^d u
\end{align*}
which gives a long exact sequence
\begin{align*}
\homs{}(\Sigma^d u, z) \to \ldots \to \homs{}(\Sigma^d e_d, z) \xrightarrow{\phi^*} \homs{}(\Sigma^d u^*, z) \xrightarrow{\delta^*} \homs{}(u, z).
\end{align*}
We can obtain from this an exact sequence
\begin{align*}
\homs{}(\Sigma^d e_d, z) \xrightarrow{\phi^*} \homs{}(\Sigma^d u^*, z) \to \textrm{Coker}(\phi^*) \to 0.
\end{align*}
It is enough to prove that $\textrm{Im}(\phi^*)$ is equal to
 $\homs{}^{[\Sigma^d\mathscr{U}]}(\Sigma^d u^*, z)$, where $\homs{}^{[\Sigma^d \mathscr{U}]}(\Sigma^d u^*, z)$ denotes the morphisms from $\Sigma^d u^*$ to $z$ that factor through $[\Sigma^d \mathscr{U}]$. Moreover, we have $\textrm{Im}(\phi^*) = \textrm{Ker}(\delta^*)$.

Firstly, take an element $\theta \in \homs{}(\Sigma^d u^*, z)$ such that $\theta$ factors through $\Sigma^d \mathscr{U}$. Then $\delta^*(\theta) = 0$, or we would have a non-zero morphism from $\mathscr{U}$ to $\Sigma^d \mathscr{U}$. So $\homs{}^{[\Sigma^d \mathscr{U}]}(\Sigma^d u^*, z) \subseteq \textrm{Ker}(\delta^*)$. Then if we have $\epsilon \in  \homs{}(\Sigma^d u^*, z)$ such that $\delta^*(\epsilon) = 0$, by exactness $\epsilon$ factors through $\Sigma^d e_d \in \Sigma^d \mathscr{U}$. Thus, $\homs{}^{[\Sigma^d \mathscr{U}]}(\Sigma^d u^*, z) = \textrm{Ker}(\delta^*)$.
%
%
\end{proof}

We may use these results to prove more properties of the $c$-vector.

\begin{proposition}\label{someProp}
Let $\mathscr{C}$, $T$, and $U$ be as in Definition \ref{boilerplateDef}. Suppose that $d$ is odd, and suppose that $U$ is mutable at $u$ with the mutation $u^*$ such that $u$ and $u^*$ form an exchange pair.
\begin{itemize}
\item[(i)] If $\cvecU([t]) \geq 0$ for all $t \in \mathscr{T}$, then for any $t \in \mathscr{T}$
\begin{align*}
\cvecU{}([t]) = \textrm{dim}_K\textrm{Im}(\homs{}(t, u) \xrightarrow{\delta_*} \homs{}(t, \Sigma^d u^*))
\end{align*}
where $\delta$ is the morphism from $u$ to $\Sigma^d u^*$ shown in the exchange $(d+2)$-angle, as seen in equation (\ref{exchangeOne}).
\item[(ii)] If $\cvecU \leq 0$ for all $t \in \mathscr{T}$, then for any $t \in \mathscr{T}$
\begin{align*}
\cvecU{}([t]) = -\textrm{dim}_K\textrm{Im}(\homs{}(t, u^*) \xrightarrow{\epsilon_*} \homs{}(t, \Sigma^d u))
\end{align*}
where $\epsilon$ is the morphism from $u^*$ to $\Sigma^d u$ shown in the exchange $(d+2)$-angle, as seen in equation (\ref{exchangeTwo}).
\end{itemize}
\end{proposition}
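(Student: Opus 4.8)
The plan is to invoke Lemma \ref{coherenceLemma}, which guarantees that exactly one of the two hypotheses ``$\cvecU{}([t]) \geq 0$ for all $t$'' or ``$\cvecU{}([t]) \leq 0$ for all $t$'' holds; I treat these as cases (i) and (ii) respectively. In either case I first reduce $\cvecU{}([t])$ to the dimension of a single stable hom-space. By Lemma \ref{myLemma}, since $d$ is odd,
\[
\cvecU{}([t]) = -\dim_K \tilthomsU{}(\Sigma^d t, \Sigma^d u^*) + \dim_K \tilthomsU{}(\Sigma^d u^*, \Sigma^{2d} t),
\]
and at least one of the two terms vanishes. In case (i) non-negativity forces $\cvecU{}([t]) = \dim_K \tilthomsU{}(\Sigma^d u^*, \Sigma^{2d} t)$, while in case (ii) non-positivity forces $\cvecU{}([t]) = -\dim_K \tilthomsU{}(\Sigma^d t, \Sigma^d u^*)$; these are precisely the expressions already isolated in the proof of Lemma \ref{coherenceLemma}.

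For case (i) I would compute $\tilthomsU{}(\Sigma^d u^*, \Sigma^{2d} t)$ by setting $z = \Sigma^{2d} t$ in Lemma \ref{cokerLemma}, which presents it as $\textrm{Coker}(\phi^*)$. Reading off the long exact sequence obtained by applying $\homs{}(-, \Sigma^{2d} t)$ to the rotation of (\ref{exchangeOne}), and using $\textrm{Im}(\phi^*) = \textrm{Ker}(\delta^*)$, this cokernel is isomorphic to $\textrm{Im}(\delta^* \colon \homs{}(\Sigma^d u^*, \Sigma^{2d} t) \to \homs{}(u, \Sigma^{2d} t))$, where $\delta^*$ is precomposition with $\delta$. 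The final step is to transport this to the source side: since $\mathscr{C}$ is $2d$-Calabi-Yau with Serre functor $S \cong \Sigma^{2d}$, naturality of the Serre duality isomorphism in its first argument identifies $\delta^*$ with the $K$-dual of $\delta_* \colon \homs{}(t, u) \to \homs{}(t, \Sigma^d u^*)$ (postcomposition with $\delta$). As dualizing preserves the rank of a linear map, $\dim_K \textrm{Im}(\delta^*) = \dim_K \textrm{Im}(\delta_*)$, which is the claimed formula.

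For case (ii) no Serre duality is needed. Applying $\Sigma^{-d}$ identifies $\tilthomsU{}(\Sigma^d t, \Sigma^d u^*)$ with the quotient of $\homs{}(t, u^*)$ by those morphisms that factor through $\mathscr{U}$. I would then apply $\homs{}(t, -)$ to the exchange $(d+2)$-angle (\ref{exchangeTwo}), giving an exact sequence $\homs{}(t, f_1) \xrightarrow{\sigma_*} \homs{}(t, u^*) \xrightarrow{\epsilon_*} \homs{}(t, \Sigma^d u)$. Morphisms $t \to u^*$ factoring through $\mathscr{U}$ are killed by $\epsilon_*$, because postcomposing the relevant $M \to u^*$ (with $M \in \mathscr{U}$) with $\epsilon$ gives a morphism in $\homs{}(M, \Sigma^d u)$, which lies in $\homs{}(\mathscr{U}, \Sigma^d \mathscr{U}) = 0$ by Definition \ref{OTDef}(i); conversely every element of $\textrm{Ker}(\epsilon_*) = \textrm{Im}(\sigma_*)$ factors through $f_1 \in \mathscr{U}$. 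Hence the factoring-through-$\mathscr{U}$ ideal equals $\textrm{Ker}(\epsilon_*)$, so $\tilthomsU{}(\Sigma^d t, \Sigma^d u^*) \cong \homs{}(t, u^*)/\textrm{Ker}(\epsilon_*) \cong \textrm{Im}(\epsilon_*)$, yielding $\cvecU{}([t]) = -\dim_K \textrm{Im}(\epsilon_*)$.

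I expect the main obstacle to be the Calabi-Yau bookkeeping in case (i): pinning down precisely which Serre duality naturality square to use and checking that precomposition with $\delta$ is carried to the $K$-dual of postcomposition with $\delta$ (so that images, rather than kernels, are matched). The remaining ingredients---the sign analysis extracted from Lemma \ref{myLemma} and the exact-sequence manipulation in case (ii)---are routine homological algebra.
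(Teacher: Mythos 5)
Your proposal is correct and follows essentially the same route as the paper: reduce via Lemma \ref{myLemma} to a single stable hom-space in each case, handle case (i) with Lemma \ref{cokerLemma} plus Serre duality (the paper dualizes the exact sequence $0 \to \mathrm{Im}(\delta_*) \to \homs{}(t,\Sigma^d u^*) \to \homs{}(t,\Sigma^d e_d)$ where you instead identify $\mathrm{Coker}(\phi^*)\cong\mathrm{Im}(\delta^*)$ and invoke naturality, but these are the same computation packaged differently), and handle case (ii) by the direct kernel/image analysis of $\epsilon_*$. Your explicit attention to the naturality of Serre duality and to the $\Sigma^{-d}$ identification in case (ii) makes precise two steps the paper leaves implicit.
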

\begin{proof}
Firstly, let $t \in \mathscr{T}$ be given. Then, as $\mathscr{U}$ is Oppermann-Thomas cluster tilting, there is a $(d+2)$-angle
\begin{align*}
u_d \to u_{d-1} \to \ldots \to u_0 \to \Sigma^d t \to \Sigma^d u_d.
\end{align*}
By Lemma \ref{myLemma}, we have that
\begin{align}\label{importantEquation}
\cvecU{}([t]) &=  (-1)^d[\textrm{dim}_K\tilthomsU{}(\Sigma^d t, \Sigma^d u^*) + (-1)^d\textrm{dim}_K\tilthomsU{}(\Sigma^du^*, \Sigma^{2d}t)],
\end{align}
where at least one of the terms on the right hand side is zero.

\paragraph{Part (i):}
As we have assumed that $\cvecU{}([t])$ is non-negative and $d$ is odd, we have that
\begin{align*}
\cvecU{}([t]) &= \textrm{dim}_K\tilthomsU{}(\Sigma^du^*, \Sigma^{2d}t).
\end{align*}
We also have the exchange $(d+2)$-angle
\begin{align*}
u^* \to e_d \to \ldots \to e_1 \to u \xrightarrow{\delta} \Sigma^d u^*.
\end{align*}
This induces a morphism
\begin{align*}
\delta_*:\homs{}(t, u) \to \homs{}(t, \Sigma^d u^*).
\end{align*}
Then by the above, our claim becomes:
For $t \in \mathscr{T}$,
\begin{align*}
\textrm{dim}_K\textrm{Im}(\delta_*) = \textrm{dim}_K\tilthomsU{}(\Sigma^du^*, \Sigma^{2d}t).
\end{align*}

We have a long exact sequence
\begin{align*}
\ldots \to  \homs{}(t, u) \xrightarrow{\delta_*} \homs{}(t, \Sigma^d u^*) \to \homs{}(t, \Sigma^d e_d) \to \ldots,
\end{align*}
from which we obtain an exact sequence
\begin{align*}
0 \to \textrm{Im}(\delta_*) \to \homs{}(t, \Sigma^d u^*) \to \homs{}(t, \Sigma^d e_d).
\end{align*}
As the category $\mathscr{C}$ is $2d$-Calabi-Yau, we may apply the Serre duality to this sequence to obtain the exact sequence
\begin{align*}
0 \to \textrm{Im}(\delta_*) \to D\homs{}(\Sigma^d u^*, \Sigma^{2d}t) \to D\homs{}(\Sigma^d e_d, \Sigma^{2d}t)
\end{align*}
to which we can apply the standard duality functor to obtain the exact sequence
\begin{align*}
\homs{}(\Sigma^d e_d, \Sigma^{2d}t) \xrightarrow{\phi^*} \homs{}(\Sigma^d u^*, \Sigma^{2d}t) \to D\textrm{Im}(\delta_*) \to 0,
\end{align*}
where $\phi:\Sigma^d u^* \to \Sigma^d e_d$. By lemma \ref{cokerLemma} we see that $D\textrm{Im}(\delta_*) \cong \tilthomsU{}(\Sigma^d u^*, \Sigma^{2d} t)$. This proves part (i).
\paragraph{Part (ii):}
As we have assumed that $\cvecU{}([t])$ is non-positive and $d$ is odd, we have from equation (\ref{importantEquation}) that
\begin{align*}
\cvecU{}([t]) &= -\textrm{dim}_K\tilthomsU{}(\Sigma^d t, \Sigma^d u^*).
\end{align*}
We also have the exchange $(d+2)$-angle
\begin{align*}
u \to f_d \to \ldots \to f_1 \to u^* \xrightarrow{\epsilon} \Sigma^d u.
\end{align*}
This induces a morphism
\begin{align*}
\epsilon_*:\homs{}(t, u^*) \to \homs{}(t, \Sigma^d u).
\end{align*}
As in the proof of part (i), we may rewrite the claim: for $t \in \mathscr{T}$,
\begin{align*}
\textrm{dim}_K\textrm{Im}(\epsilon_*) = \textrm{dim}_K\tilthomsU{}(\Sigma^dt, \Sigma^du^*).
\end{align*}
We have a long exact sequence
\begin{align*}
\ldots \to  \homs{}(t, u^*) \xrightarrow{\epsilon_*} \homs{}(t, \Sigma^d u) \to \homs{}(t, \Sigma^d f_d) \to \ldots,
\end{align*}
from which we obtain an exact sequence
\begin{align*}
0 \to \textrm{Im}(\epsilon_*) \to \homs{}(t, \Sigma^d u) \to \homs{}(t, \Sigma^d f_d).
\end{align*}
We wish to examine the image of $\epsilon_*$; in fact, we aim to prove that it is isomorphic to $\frac{\mathscr{C}}{[\mathscr{U}]}(t, u^*)$. Firstly, take an element $\theta \in \homs{}(t, u^*)$ such that $\theta$ factors through $\mathscr{U}$. Then $\epsilon_*(\theta) = 0$, or we would have a non zero morphism from $\mathscr{U}$ to $\Sigma^d\mathscr{U}$. So $\homs{}^{[\mathscr{U}]}(t,u^*) \subseteq \textrm{Ker}(\epsilon_*)$. Then, if we have $\phi \in \homs{}(t, u^*)$ such that $\epsilon_*(\phi) = 0$, by exactness we have that $\phi$ factors through $f_1 \in \mathscr{U}$. Thus, $\homs{}^{[\mathscr{U}]}(t,u^*) = \textrm{Ker}(\epsilon_*)$. As we know that $\textrm{Im}(\epsilon_*) \cong \frac{\homs{}(t, u^*)}{\textrm{Ker}(\epsilon_*)}$, this proves the result.
\end{proof}

\subsection{Proof of Theorem C}
We can use these results to prove our final claim. We restate Theorem \ref{TheoremC} here:
\begin{theorem*}
Let $\mathscr{C}$, $T$, and $U$ be as in Definition \ref{boilerplateDef}. Suppose that $d$ is odd, and suppose that $U$ is mutable at $u$ with the mutation $u^*$ such that $u$ and $u^*$ form an exchange pair. Then either (i) or (ii) below is true.
\begin{itemize}
\item[(i)] $\cvecU([t]) \geq 0$ for all $t \in \mathscr{T}$ and
\begin{align*}
\cvecU{}([t]) = \textrm{dim}_K \textrm{Hom}_{\Gamma_T}(F_T(t), \textrm{Im}(\delta_*))
\end{align*}
where $\delta$ is the morphism from $u$ to $\Sigma^d u^*$ in equation (\ref{exchangeOne}) and $\delta_* = F_T(\delta)$.
\item[(ii)] $\cvecU \leq 0$ for all $t \in \mathscr{T}$ and
\begin{align*}
\cvecU{}([t]) = -\textrm{dim}_K\textrm{Hom}_{\Gamma_T}(F_T(t), \textrm{Im}(\epsilon_*))
\end{align*}
where $\epsilon$ is the morphism from $u^*$ to $\Sigma^d u$  in equation (\ref{exchangeTwo}) and $\epsilon_* = F_T(\epsilon)$.
\end{itemize}
Note that if $t$ is an indecomposable summand of $T$ then $F_T(t)$ is an indecomposable projective $\Gamma_T$-module. Hence $\textrm{dim}_K\textrm{Hom}_{\Gamma_T}(F_T(t), M)$ is an entry in the dimension vector of $M$ when $M \in \textrm{mod}\Gamma_T$.
\end{theorem*}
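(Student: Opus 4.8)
The plan is to reduce Theorem \ref{TheoremC} to the object-level formulae of Proposition \ref{someProp} and then transport those formulae into $\textrm{mod }\Gamma_T$ using that $F_T(t)$ is projective. First I would invoke Lemma \ref{coherenceLemma}, which already splits the situation into the two cases (i) $\cvecU{}([t]) \geq 0$ for all $t \in \mathscr{T}$ and (ii) $\cvecU{}([t]) \leq 0$ for all $t \in \mathscr{T}$; this is exactly the dichotomy asserted in the theorem. Because the exchange $(d+2)$-angles (\ref{exchangeOne}) and (\ref{exchangeTwo}) are interchanged by swapping the roles of $(u,\delta)$ and $(u^*,\epsilon)$, it suffices to prove case (i) in full and then run the identical argument with $u^*,\epsilon$ in place of $u,\delta$ to obtain case (ii).

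Assuming $\cvecU{}([t]) \geq 0$, Proposition \ref{someProp}(i) gives, for each $t \in \mathscr{T}$,
\[
\cvecU{}([t]) = \dim_K \textrm{Im}\big(\homs{}(t, u) \xrightarrow{\delta_*} \homs{}(t, \Sigma^d u^*)\big),
\]
where on the right $\delta_*$ denotes post-composition with $\delta$. The entire content of the theorem then reduces to the claim that this dimension equals $\dim_K \textrm{Hom}_{\Gamma_T}(F_T(t), \textrm{Im}(F_T(\delta)))$, i.e. that the categorical image can be replaced by an image inside the module category without changing its dimension.

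The mechanism for this conversion is the projectivity of $F_T(t)$: since $t \in \mathscr{T} = \add{}(T)$, the module $F_T(t) = \homs{}(T, t)$ is projective, so $\textrm{Hom}_{\Gamma_T}(F_T(t), -)$ is exact and commutes with images. Applying it to the canonical factorisation $F_T(u) \twoheadrightarrow \textrm{Im}(F_T(\delta)) \hookrightarrow F_T(\Sigma^d u^*)$ yields
\[
\textrm{Hom}_{\Gamma_T}(F_T(t), \textrm{Im}(F_T(\delta))) = \textrm{Im}\big(\textrm{Hom}_{\Gamma_T}(F_T(t), F_T(u)) \to \textrm{Hom}_{\Gamma_T}(F_T(t), F_T(\Sigma^d u^*))\big).
\]
I would then identify each module Hom-space with its categorical counterpart via the isomorphism $\textrm{Hom}_{\Gamma_T}(F_T(t), F_T(y)) \cong \tilthoms{}(t, y)$ recalled before Theorem \ref{thmC}. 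Here the hypothesis $t \in \mathscr{T}$ pays off a second time: since $\homs{}(\mathscr{T}, \Sigma^d \mathscr{T}) = 0$ by Definition \ref{OTDef}(i), any morphism out of $t$ factoring through $\Sigma^d \mathscr{T}$ has a zero first leg and hence vanishes, so $\tilthoms{}(t, y) = \homs{}(t, y)$. Taking $y = u$ and $y = \Sigma^d u^*$, and checking that the map induced by $F_T(\delta)$ corresponds under these isomorphisms to $\delta_*$, I recover precisely the image appearing in Proposition \ref{someProp}(i); comparing dimensions closes case (i), and case (ii) is verbatim.

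The step I expect to be the genuine obstacle is exactly this last identification: ensuring simultaneously that $\textrm{Hom}_{\Gamma_T}(F_T(t), -)$ commutes with the image of $F_T(\delta)$ and that it is compatible with the categorical Hom-spaces, so that no dimension is lost in translation. Everything rests on $F_T(t)$ being projective, which is why restricting to $t \in \mathscr{T}$ is essential; when $t$ is moreover indecomposable, $F_T(t)$ is an indecomposable projective and $\dim_K \textrm{Hom}_{\Gamma_T}(F_T(t), M)$ is literally the corresponding entry of the dimension vector of $M$, which is the concluding remark of the statement.
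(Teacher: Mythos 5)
Your proposal is correct and follows essentially the same route as the paper: invoke Lemma \ref{coherenceLemma} for the dichotomy, reduce to the object-level formula of Proposition \ref{someProp}, and transfer it to $\textrm{mod }\Gamma_T$ by applying $\textrm{Hom}_{\Gamma_T}(F_T(t),-)$ to the epi-mono factorisation of $F_T(\delta)$ and identifying $\textrm{Hom}_{\Gamma_T}(F_T(t),F_T(y))$ with $\homs{}(t,y)$ via the quotient by $[\Sigma^d\mathscr{T}]$. You are in fact slightly more explicit than the paper about why projectivity of $F_T(t)$ preserves the image under $\textrm{Hom}_{\Gamma_T}(F_T(t),-)$, which the paper leaves implicit in its commutative diagrams.
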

\begin{proof}
Lemma \ref{coherenceLemma} says that either $\cvecU{}([t]) \geq 0$ for all $t \in \mathscr{T}$ or $\cvecU{}([t]) \leq 0$ for all $t \in \mathscr{T}$. Assume the former and begin with the map obtained from the exchange angle
\begin{align*}
u \xrightarrow{\delta} \Sigma^d u^*.
\end{align*}
We can apply the functor $F_T$ to obtain a commutative diagram
\begin{center}
\begin{tikzpicture}[line cap = round, line join = round]
\node (a) at (-3, 2) {$F_T(u)$};
\node (b) at (3, 2) {$F_T(\Sigma^d u^*).$};
\node (c) at (0,0) {$\textrm{Im}\delta_*$};

\draw [->] (a) to node[above]{$\delta_*$} (b);
\draw [->>] (a) to (c);
\draw[right hook->] (c) to (b);

\end{tikzpicture}
\end{center}
We may then apply the functor $\textrm{Hom}_{\Gamma_T}(F_T(t), -)$ to this diagram to obtain
\begin{center}
\begin{tikzpicture}[line cap = round, line join = round]
\node (a) at (-4, 2) {$\textrm{Hom}_{\Gamma_T}(F_T(t),F_T(u))$};
\node (b) at (4, 2) {$\textrm{Hom}_{\Gamma_T}(F_T(t),F_T(\Sigma^d u^*)).$};
\node (c) at (0,0) {$\textrm{Hom}_{\Gamma_T}(F_T(t),\textrm{Im}\delta_*)$};

\draw [->] (a) to node[above]{\scriptsize $\textrm{Hom}_{\Gamma_T}(F_T(t),\delta_*)$} (b);
\draw [->>] (a) to (c);
\draw[right hook->] (c) to (b);

\end{tikzpicture}
\end{center}
This actually gives us the following commutative diagram:
\begin{center}
\begin{tikzpicture}[line cap = round, line join = round]
\node (a) at (-3, 2) {$\homs{}(t,u)$};
\node (b) at (3, 2) {$\homs{}(t, \Sigma^d u^*).$};
\node (c) at (0,0) {$\textrm{Hom}_{\Gamma_T}(F_T(t),\textrm{Im}\delta_*)$};

\draw [->] (a) to node[above]{$\homs{}(t,\delta)$} (b);
\draw [->>] (a) to (c);
\draw[right hook->] (c) to (b);

\end{tikzpicture}
\end{center}
Combining this with Proposition \ref{someProp}(i), we obtain the required equality. The proof of part (ii) uses the same arguments.
\end{proof}
\section{A Counterexample}
For a triangulated category $\mathscr{C}$ with two cluster tilting subcategories $\mathscr{T}$ and $\mathscr{U}$, we always have sign coherence in the $c$-vector; that is, for a given $u \in \mathscr{U}$ and for all $t \in \mathscr{T}$, either $\cvecU{}(t) \geq 0$ or $\cvecU{}(t) \leq 0$. We demonstrate an example here where this sign coherence is not achieved for a higher case.\\

We will be working with the Oppermann-Thomas $(d+2)$-angulated categories of type $A_n$. We will label them as $\mathscr{C}(A_n^d)$.

The following description of $\mathscr{C}(A_n^d)$ is a restatement of Propositions 3.12 and 6.1 and Lemma 6.6(2) in \cite{OppermannThomas}. We take the canonical cyclic ordering of the set $V = \{1, \ldots, n + 2d+1\}$, which it can be helpful to think of as the vertices of an $(n + 2d + 1)$-gon labelled in a clockwise direction. This means that for three points in our ordering $x, y, z$ such that $x < y < z$, if we start at $x$ and move clockwise, we will encounter first $y$ then $z$. It is worth noting that if we have $x < y < z$, then we also have that $y < z \leq x$ and $z \leq x <  y$. For a point $x$ in our ordering, we denote by $x^-$ the vertex of our polygon that is one step anticlockwise of $x$.\\

\begin{proposition}\label{propBij}
The indecomposable objects of $\mathscr{C}(A_n^d)$ are in bijection with subsets of $V$ that have size $d+1$ and contain no neighbouring vertices. We identify each indecomposable $X$ with its subset of $V$, and will write $X = \{x_0, x_1, \ldots, x_d\}$.
\end{proposition}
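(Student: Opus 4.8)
Because $\mathscr{C}(A_n^d)$ is not constructed from scratch here but imported from Oppermann--Thomas, the plan is to obtain the bijection by assembling the three cited results rather than by a fresh argument. The conceptual starting point should be the realization, recorded in \cite[Proposition~3.12]{OppermannThomas}, of $\mathscr{C}(A_n^d)$ as the appropriate $d$-cluster tilting subcategory (closed under $\Sigma^d$) of the algebraic triangulated category attached to the higher Auslander algebra of type $A$; this is what pins down the additive category whose indecomposables we must classify. The underlying combinatorics is that of the even-dimensional cyclic polytope on $n+2d+1$ vertices, whose \emph{internal} $d$-simplices are exactly the $(d+1)$-subsets of the vertex set containing no two cyclically adjacent vertices.

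First I would fix the dictionary between this combinatorial picture and the polygon picture used in this section, identifying the $n+2d+1$ polytope vertices with $V=\{1,\dots,n+2d+1\}$ under the canonical clockwise cyclic order. With this dictionary in place, \cite[Proposition~6.1]{OppermannThomas} supplies the bijection between indecomposable objects and the internal $d$-simplices, and the remaining task is to verify that internality translates precisely into the size-$(d+1)$ and no-neighbouring-vertices conditions stated here. As a consistency check I would specialise to $d=1$: then $V$ has $n+3$ elements, $(d+1)$-subsets are pairs, and the non-adjacency condition recovers the classical description of the indecomposables of the cluster category of type $A_n$ as the diagonals of an $(n+3)$-gon.

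Finally I would invoke \cite[Lemma~6.6(2)]{OppermannThomas} to confirm that the labelling is compatible with the autoequivalence $\Sigma^d$ and the $(d+2)$-angulated structure, so that the subsets genuinely index the indecomposables of $\mathscr{C}(A_n^d)$ \emph{as} a $(d+2)$-angulated category and the assignment $X \mapsto \{x_0,\dots,x_d\}$ is well defined on isomorphism classes. The one real point of friction, as opposed to bookkeeping, is that the three cited statements are phrased in slightly different languages --- higher Auslander--Reiten theory, cyclic polytopes, and the $(d+2)$-angulated orbit category; the main work of the proof is therefore to align their conventions, in particular the cardinality $n+2d+1$, the orientation of the cyclic order, and the exact form of the separation condition, after which the bijection follows immediately by quotation.
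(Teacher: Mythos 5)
Your proposal is correct and follows essentially the same route as the paper, which offers no independent argument for this proposition: it is stated explicitly as a restatement of Propositions 3.12 and 6.1 and Lemma 6.6(2) of Oppermann--Thomas, exactly the three results you assemble. Your additional remarks on aligning conventions and the $d=1$ sanity check are reasonable glosses but do not constitute a different proof.
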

We see immediately that by setting $d=1$ in proposition \ref{propBij}, we obtain the traditional cluster category of type $A_n$.

Using the identification described in proposition \ref{propBij}, we can easily describe the action of the translation functor, and also how the indecomposable objects interact with one another.
\begin{proposition}\label{propFunc}
The translation functor simply shifts an indecomposable by one place; that is, if $X = \{x_0, x_1, \ldots, x_d\}$, then $\Sigma^d(X) = \{x_0^-, x_1^-, \ldots, x_d^-\}$.
\end{proposition}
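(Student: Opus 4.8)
The plan is to deduce this directly from the explicit combinatorial model of $\mathscr{C}(A_n^d)$ constructed by Oppermann and Thomas, of which Proposition \ref{propBij} is already a restatement. In that model the indecomposables are exactly the $(d+1)$-subsets of $V = \{1, \ldots, n+2d+1\}$ with no two cyclically adjacent vertices, and the cyclic symmetry of the $(n+2d+1)$-gon lifts to an autoequivalence of the category. The entire content of the proposition is that the suspension $\Sigma^d$ --- the single automorphism of the $(d+2)$-angulated structure, emphatically not a $d$-th power of anything --- coincides with one step of this rotation. Concretely, this is the translation into our notation of \cite[Lemma~6.6(2)]{OppermannThomas}.

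First I would recall precisely how $\Sigma^d$ acts on objects in the Oppermann--Thomas construction and identify it with the rotation automorphism $\rho$ of the polygon induced by the vertex map $i \mapsto i^-$ on $V$. Granting this identification, tracking a label $X = \{x_0, \ldots, x_d\}$ through $\rho$ yields $\Sigma^d(X) = \{x_0^-, \ldots, x_d^-\}$ immediately, since $\rho$ acts vertexwise and $\Sigma^d$ is realised by applying $\rho$ to the subset.

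It then remains to confirm that the right-hand side is a legitimate label, i.e. that $\{x_0^-, \ldots, x_d^-\}$ is again a $(d+1)$-subset containing no neighbouring vertices. This is automatic: the map $i \mapsto i^-$ is a bijection of $V$ that preserves the cyclic adjacency relation, so it carries the admissible subsets of Proposition \ref{propBij} bijectively onto admissible subsets, and hence $\Sigma^d$ restricts to a well-defined permutation of the indecomposables given by the stated shift. As a consistency check one may observe that iterating $\Sigma^d$ exactly $n+2d+1$ times returns every label to itself, which matches the periodicity one expects of the suspension in this $2d$-Calabi--Yau setting.

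The main obstacle here is not mathematical depth but convention matching. I must verify that the orientation fixed in this section (clockwise labelling, with $x^-$ one step anticlockwise) makes the shift run in the ``$-$'' direction, and --- more delicately --- that the functor this paper calls $\Sigma^d$ is exactly Oppermann and Thomas's rotation autoequivalence, rather than its inverse or its composite with some other equivalence. This bookkeeping is the only place a genuine error can creep in, so the care in the proof will be spent aligning the two sets of conventions rather than on any computation.
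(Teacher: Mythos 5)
Your proposal matches the paper's treatment: Proposition \ref{propFunc} is given there with no independent proof, being presented (together with Propositions \ref{propBij}, \ref{homProp} and \ref{propFact}) as a direct restatement of Propositions 3.12 and 6.1 and Lemma 6.6(2) of \cite{OppermannThomas}, which is exactly the source you invoke. Your additional checks --- that $i \mapsto i^-$ preserves the non-adjacency condition and that the conventions for the direction of rotation align --- are sensible bookkeeping but do not constitute a different argument.
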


\begin{definition}\label{intertwine}
For two indecomposable objects $X$ and $Y$ of $\mathscr{C}(A_n^d)$, we say that $X$ and $Y$ \textit{intertwine} if there is a labelling of $X = \{x_0, x_1, \ldots, x_d\}$ and of $Y=\{y_0, y_1, \ldots, y_d\}$ such that 
\begin{align*}
x_0 < y_0 < x_1 < y_1 < x_2 < \ldots < x_d < y_d < x_0.
\end{align*}
\end{definition}
We see that definition \ref{intertwine} is symmetric; we take $Y = Y'$, where we choose the labelling as $y_i' = y_{i-1}$ for $1 \leq i \leq d$ and $y_0' = y_d$. This gives us that
\begin{align*}
y_0' < x_0 < y_1' < x_1 < y_2' <\ldots < y_d' < x_d < y_0'
\end{align*}
as required. \\

For two indecomposable objects $X$ and $Y$ of $\mathscr{C}(A_n^d)$, either $\textrm{Hom}(X, Y) = 0$ or $\textrm{Hom}(X, Y) = K$; this is the same as in the classic cluster category. In fact, we have the following: 
\begin{proposition}[{\cite[Proposition~6.1]{OppermannThomas}}]\label{homProp}
For two indecomposable objects $X$ and $Y$ of $\mathscr{C}(A_n^d)$, we have $\textrm{Hom}(X, Y) = K$ if and only if $X$ and $\Sigma^{-d}(Y)$ intertwine. This is equivalent to $X$ and $Y$ having labellings such that the following is true:
\begin{align*}
x_0 \leq y_0 \leq x_1^{--} < x_1 \leq y_1 \leq x_2^{--} < \ldots < x_d \leq y_d \leq x_0^{--}.
\end{align*}
\end{proposition}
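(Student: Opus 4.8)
The first biconditional---that $\homs{}(X,Y) = K$ exactly when $X$ and $\Sigma^{-d}(Y)$ intertwine---is precisely the content of \cite[Proposition~6.1]{OppermannThomas}, so the plan is to take it as given and to establish only the second equivalence, namely that the intertwining of $X$ and $\Sigma^{-d}(Y)$ coincides with the displayed inequality chain. The whole argument is thus a translation between two combinatorial conditions on $(d+1)$-subsets of $V$, carried out using the explicit description of the translation functor.

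First I would make $\Sigma^{-d}(Y)$ explicit. By Proposition \ref{propFunc} the functor $\Sigma^d$ sends each vertex to its anticlockwise neighbour $x \mapsto x^-$, so the inverse $\Sigma^{-d}$ sends each vertex to its clockwise neighbour; writing $y^+$ for the vertex one step clockwise of $y$ (so that $(y^+)^- = y$), we have $\Sigma^{-d}(Y) = \{y_0^+, \ldots, y_d^+\}$. Applying Definition \ref{intertwine} to $X$ and $\Sigma^{-d}(Y)$ then yields a labelling with
\begin{align*}
x_0 < y_0^+ < x_1 < y_1^+ < \cdots < x_d < y_d^+ < x_0,
\end{align*}
that is, $x_i < y_i^+$ and $y_i^+ < x_{i+1}$ for each $i$, with indices read cyclically so that $x_{d+1} = x_0$.

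The heart of the proof is converting these strict inequalities between clockwise-shifted vertices into the weak inequalities of the statement. Since $y_i^+$ is the immediate clockwise successor of $y_i$, the relation $x_i < y_i^+$ holds exactly when $x_i$ lies at or anticlockwise of $y_i$, i.e. $x_i \leq y_i$; dually, $y_i^+ < x_{i+1}$ holds exactly when $y_i^+ \leq x_{i+1}^-$, i.e. $y_i \leq x_{i+1}^{--}$. The intervening strict steps $x_{i+1}^{--} < x_{i+1}$ are automatic once the polygon has more than two vertices (here $n+2d+1 \geq 4$), and they are compatible with the no-neighbouring-vertices condition of Proposition \ref{propBij}, which forces consecutive elements of $X$ to leave a genuine gap so that $x_i \leq x_{i+1}^{--}$. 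Reading all of these equivalences in both directions and threading them around the cycle assembles exactly the chain $x_0 \leq y_0 \leq x_1^{--} < x_1 \leq y_1 \leq x_2^{--} < \cdots < x_d \leq y_d \leq x_0^{--}$, closing with the boundary term $y_d \leq x_0^{--}$ coming from $y_d^+ < x_0$.

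The main obstacle I anticipate is the bookkeeping around the cyclic wrap-around and the discrete passage between strict and weak inequalities. I must verify that the single-step shift $y_i \mapsto y_i^+$ interacts correctly with the closing comparison at $x_0$, and---crucially---that the one intertwining labelling makes all $2(d+1)$ comparisons line up simultaneously, rather than only locally between consecutive pairs. Checking that no degenerate coincidence (for instance $x_i = y_i$ colliding with the no-neighbour constraint on $X$) can break the equivalence is the delicate point, but each such case reduces to a direct inspection of the cyclic order on $V$.
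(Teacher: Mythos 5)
The paper itself offers no proof of this proposition: it is presented purely as a restatement of \cite[Proposition~6.1]{OppermannThomas} (together with material from Proposition~3.12 and Lemma~6.6(2) there), so there is nothing in the source to compare against line by line. Your proposal correctly identifies that the only content not already delegated to the citation is the combinatorial equivalence between ``$X$ and $\Sigma^{-d}(Y)$ intertwine'' and the displayed inequality chain, and your verification of that equivalence is sound. Writing $\Sigma^{-d}(Y) = \{y_0^+, \ldots, y_d^+\}$ as you do, the intertwining condition places $y_i^+$ strictly inside the clockwise arc from $x_i$ to $x_{i+1}$, i.e.\ $y_i^+ \in \{x_i^+, \ldots, x_{i+1}^-\}$, which after the unit shift is exactly $x_i \leq y_i \leq x_{i+1}^{--}$; the wrap-around term $x_d \leq y_d \leq x_0^{--}$ comes out the same way. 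Your worry about the local equivalences assembling globally is resolved precisely by the observation you make: since $X$ contains no neighbouring vertices (Proposition \ref{propBij}), consecutive elements of $X$ in the cyclic order are at least two apart, so the closed arcs $[x_i, x_{i+1}^{--}]$ are nonempty, pairwise disjoint, and themselves cyclically ordered, and a choice of $y_i$ in each therefore reproduces the full chain in both directions (including the degenerate cases $y_i = x_i$ or $y_i = x_{i+1}^{--}$, which cause no collision because $x_i^+ \neq x_{i+1}$). In short, your argument supplies a verification that the paper omits; the only cosmetic caveat is that cyclic-order comparisons such as $x_i < y_i^+$ are only meaningful relative to the ambient chain, a point you flag and handle correctly.
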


We may also speak to whether or not there is a factorisation of a non-zero homomorphism in $\mathscr{C}(A_n^d)$.
\begin{proposition}[{\cite[Proposition~3.12]{OppermannThomas}}]\label{propFact}
For two indecomposable objects $X$ and $Y$ of $\mathscr{C}(A_n^d)$ satisfying the condition in Proposition \ref{homProp}, a non-zero morphism $X \to Y$ factors through a third irreducible $Z$ if and only if there exists a labelling for $Z=\{z_0, z_1, \ldots, z_d\}$ such that
\begin{align*}
x_0 \leq z_0 \leq y_0, x_1 \leq z_1 \leq y_1, \ldots, x_d \leq z_d \leq y_d.
\end{align*}
\end{proposition}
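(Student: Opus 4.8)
The plan is to exploit the fact, recorded just before Proposition \ref{homProp}, that between any two indecomposables of $\mathscr{C}(A_n^d)$ every Hom space is either $0$ or $K$. Consequently ``a non-zero morphism $X \to Y$ factors through $Z$'' is equivalent to the purely existential statement that there are non-zero morphisms $X \xrightarrow{f} Z$ and $Z \xrightarrow{g} Y$ with $gf \neq 0$: since $\homs{}(X,Y)$ is one-dimensional, any non-zero composite is automatically a scalar multiple of the given morphism, so it suffices to control the \emph{existence} of such $f,g$ with $gf \neq 0$. The whole argument then reduces to two tasks: translating the non-vanishing of $\homs{}(X,Z)$ and $\homs{}(Z,Y)$ into interlacing inequalities via Proposition \ref{homProp}, and deciding exactly when the composite survives.

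For the ``if'' direction I would start from a common labelling realising $x_i \le z_i \le y_i$ and first check both Hom spaces are non-zero. This is a short inequality chase: the hypothesis $\homs{}(X,Y)=K$ gives $x_i \le y_i \le x_{i+1}^{--}$ for all $i$ (indices cyclic), and combining with $x_i \le z_i \le y_i$ yields $x_i \le z_i \le x_{i+1}^{--}$ (since $z_i \le y_i \le x_{i+1}^{--}$) and $z_i \le y_i \le z_{i+1}^{--}$ (since $x_{i+1} \le z_{i+1}$ gives $x_{i+1}^{--} \le z_{i+1}^{--}$ locally). By Proposition \ref{homProp} this means $\homs{}(X,Z)=\homs{}(Z,Y)=K$. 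It then remains to show $gf \neq 0$, the genuinely hard point addressed below.

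For the ``only if'' direction, a non-zero composite forces both $\homs{}(X,Z)$ and $\homs{}(Z,Y)$ to be non-zero, so Proposition \ref{homProp} supplies labellings of $Z$ with $x_i \le z_i \le x_{i+1}^{--}$ and, for a possibly different rotated labelling, $z'_i \le y_i \le (z'_{i+1})^{--}$. Once these are aligned into a single labelling, the inequalities $x_i \le z_i \le y_i$ read off immediately. The real content is again the behaviour of composition: I would argue that a non-zero composite can only occur when the two labellings are compatible, which simultaneously rules out the rotated configurations and delivers the common labelling.

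The main obstacle, present in both directions, is controlling when a composite of two non-zero morphisms vanishes --- the higher analogue of the mesh relations in the ordinary type $A$ cluster category. To handle it I would invoke the explicit realisation of $\mathscr{C}(A_n^d)$ used by Oppermann--Thomas (as a $d$-cluster-type subcategory of a derived category, equivalently via the combinatorial model of non-crossing $(d+1)$-subsets), in which each non-zero morphism between interlacing indecomposables is a prescribed standard map with an explicit composition rule. In that model I expect the composite of the standard maps $X \to Z$ and $Z \to Y$ to equal the standard map $X \to Y$ precisely when the coordinates are coordinatewise monotone, that is $x_i \le z_i \le y_i$, and to vanish otherwise. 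Establishing this composition law, together with the compatibility of cyclic labellings it forces, is the crux; an alternative would be an induction on the total clockwise displacement $\sum_i (y_i - x_i)$, factoring $X \to Y$ one irreducible step at a time, but the cyclic intertwining of the $d+1$ coordinates makes that bookkeeping delicate.
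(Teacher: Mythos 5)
First, note that the paper does not prove this statement at all: Proposition \ref{propFact} is quoted verbatim from \cite[Proposition~3.12]{OppermannThomas} (the surrounding text explicitly says the whole description of $\mathscr{C}(A_n^d)$ is a ``restatement'' of results from that paper), so there is no in-paper proof to compare your argument against. The actual proof lives in Oppermann--Thomas and relies on their explicit realisation of $\mathscr{C}(A_n^d)$ inside the derived category of the higher Auslander algebra $A_n^{d}$, where morphisms between the relevant indecomposables and their compositions are computed concretely.

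Judged on its own terms, your proposal has a genuine gap that you yourself flag: everything hinges on the ``composition law'' asserting that the composite of the standard non-zero maps $X \to Z$ and $Z \to Y$ is non-zero precisely when the coordinates satisfy $x_i \le z_i \le y_i$ in a single common labelling, and vanishes otherwise. You state that you \emph{expect} this to hold and that establishing it ``is the crux,'' but you give no argument for it; without it neither direction closes. In the ``if'' direction the inequality chase showing $\homs{}(X,Z)$ and $\homs{}(Z,Y)$ are both non-zero is fine, but non-vanishing of the two factors does not by itself give a non-vanishing composite (this is exactly the higher mesh-relation phenomenon you mention). In the ``only if'' direction the alignment of the two labellings of $Z$ supplied by Proposition \ref{homProp} is likewise asserted rather than proved: a priori the labelling witnessing $\homs{}(X,Z)=K$ could be a cyclic rotation of the one witnessing $\homs{}(Z,Y)=K$, and ruling this out when the composite is non-zero is again part of the missing composition analysis. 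So the proposal is a sensible reduction and correctly locates the difficulty, but it is a plan rather than a proof; to complete it you would need to import the explicit morphism calculus from the Oppermann--Thomas model (or an equivalent combinatorial description of the radical filtration of $\homs{}$ in $\mathscr{C}(A_n^d)$), at which point the statement essentially becomes the cited Proposition~3.12 itself.
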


It is also true, again due to \cite{OppermannThomas}, that our categories $\mathscr{C}(A_n^d)$ permit Oppermann-Thomas cluster tilting objects. By \cite[Theorem~2.4]{OppermannThomas} and \cite[Theorem~6.4]{OppermannThomas}, the sum $T$ of ${n + d - 1 \choose d}$ mutually non-intertwining indecomposable objects is an Oppermann-Thomas cluster tilting object. Combining this with \cite[Lemma~6.6]{OppermannThomas} gives us the following proposition:
\begin{proposition}\label{sizeProp}
The sum $T$ of ${n + d - 1 \choose d}$ mutually non-intertwining indecomposable objects of $\mathscr{C}(A_n^d)$ is an Oppermann-Thomas cluster tilting object. Moreover, this describes all such objects. These objects are maximal with respect to the non-intertwining property.
\end{proposition}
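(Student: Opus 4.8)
The plan is to assemble the cited results of Oppermann and Thomas, using the combinatorial dictionary recorded in Propositions \ref{homProp} and \ref{propFunc} to translate the defining conditions of Definition \ref{OTDef} into the non-intertwining language of Definition \ref{intertwine}. The only step that genuinely requires the tools developed in this paper is the observation that condition (i) of Definition \ref{OTDef} is exactly the pairwise non-intertwining condition; the deeper content — that condition (ii) is equivalent to maximality, together with the count of the size of a maximal collection — is supplied by \cite{OppermannThomas}.

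First I would establish the translation of condition (i). Let $X$ and $Y$ be indecomposable summands of $T$, and write $\mathscr{T} = \add(T)$. By Proposition \ref{homProp}, $\homs{}(X, \Sigma^d Y) = K$ if and only if $X$ and $\Sigma^{-d}(\Sigma^d Y) = Y$ intertwine, and $\homs{}(X, \Sigma^d Y) = 0$ otherwise. Since an indecomposable never intertwines with itself, it follows that $\homs{}(\mathscr{T}, \Sigma^d \mathscr{T}) = 0$ holds if and only if the indecomposable summands of $T$ are pairwise non-intertwining. Hence condition (i) of Definition \ref{OTDef} is equivalent to the statement that the summands of $T$ are mutually non-intertwining.

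Next I would invoke the Oppermann-Thomas model. By \cite[Theorem~2.4]{OppermannThomas} and \cite[Theorem~6.4]{OppermannThomas}, an object $T$ whose summands are mutually non-intertwining additionally satisfies condition (ii) of Definition \ref{OTDef}, and is therefore Oppermann-Thomas cluster tilting, if and only if the collection of its summands is maximal among pairwise non-intertwining collections (equivalently, the associated $d$-simplices form a triangulation of the relevant even-dimensional cyclic polytope). This shows simultaneously that every maximal non-intertwining collection is cluster tilting and that every cluster tilting object is a maximal non-intertwining collection. Finally, \cite[Lemma~6.6]{OppermannThomas} computes that every maximal non-intertwining collection has exactly ${n + d - 1 \choose d}$ elements, so this is also the largest possible size of any non-intertwining collection.

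With these facts the three assertions follow at once. A non-intertwining collection of size ${n + d - 1 \choose d}$ has the maximal possible size, hence is maximal, hence is cluster tilting, which gives the first claim. Conversely a cluster tilting object is a maximal non-intertwining collection, which yields both that these describe all such objects and that they are maximal; by \cite[Lemma~6.6]{OppermannThomas} it then has exactly ${n + d - 1 \choose d}$ summands, consistent with Theorem \ref{rmkB}. The main obstacle is the equivalence used in the third paragraph, between condition (ii) and maximality of the non-intertwining collection: this is the genuine geometric content of \cite{OppermannThomas}, established there through triangulations of cyclic polytopes, and I would cite rather than reprove it.
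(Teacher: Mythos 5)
Your proposal is correct and follows essentially the same route as the paper, which likewise obtains the proposition by assembling \cite[Theorem~2.4]{OppermannThomas} and \cite[Theorem~6.4]{OppermannThomas} with \cite[Lemma~6.6]{OppermannThomas} and gives no further argument. The only addition on your side is the explicit translation of condition (i) of Definition \ref{OTDef} into pairwise non-intertwining via Proposition \ref{homProp} (including the observation that an indecomposable never intertwines itself), which is a correct elaboration of a step the paper leaves implicit.
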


We now state the counterexample:
Let $\mathscr{C} = \mathscr{C}(A_3^3)$. We let $T$ be the object given by summing all of the indecomposables containing the vertex $1$, and $U$ be the object obtained by summing all of the indecomposables containing the vertex $3$. In both cases the indecomposables are obviously non-intertwining and there are ${5 \choose 3}$ of them, so by Proposition \ref{sizeProp} $T$ and $U$ are Oppermann-Thomas cluster tilting objects. Let $\mathscr{T} = \textrm{add }T$ and $\mathscr{U} = \textrm{add }U$ be the Oppermann-Thomas cluster tilting subcategories associated with these objects. \\

We set $u = (3,5,8,10) \in \mathscr{U}$. We will examine the action of the $c$-vector $\cvecU{}$ on $\mathscr{T}$. \\
We take the indecomposable $t_1 =(1,4,6,9) \in \mathscr{T}$. By Theorem \ref{lemmap1}, we can calculate $\cvecU{}(t_1)$ by  taking the coefficient of $[u]$ in $\indU{}(\Sigma^3 t_1)$ and multiplying by $(-1)^3 = -1$. By Proposition \ref{propFunc} we see that $\Sigma^3 t_1 = (3,5,8,10)$, which is equal to $u$. Then $\indU{}(\Sigma^3 t_1) = [u]$, so we have that $\cvecU{}(t_1) = -1$. \\

We take the indecomposable $t_2 =(1,5,7,9) \in \mathscr{T}$. Again, we calculate $\Sigma^3 t_2 = (4,6,8,10)$. This is not in $\mathscr{U}$, so we need to find the $5$-angle made up of objects in $\mathscr{U}$ that covers this to give us the index. We aim to do this using \cite[Theorem~6.3]{OppermannThomas}. Firstly, we select an element of $\mathscr{U}$ which intertwines $\Sigma^3 t_2$; we take $(3,5,7,9)$. This gives us the $5$-angle
\begin{align*}
(3,5,7,9) \to (3,5,7,10) \to (3,5,8,10) \to (3,6,8,10) \to \Sigma^3 t_2 \to \Sigma^3 (3,5,7,9).
\end{align*}
As each object in the angle except for $\Sigma^3 t_2$ is in $\mathscr{U}$, we have that
\begin{align*}
\indU(\Sigma^3 t_2) = -[(3,5,7,9)] + [(3,5,7,10)] - [(3,5,8,10)] + [(3,6,8,10)].
\end{align*}
It follows that $\cvecU{}(t_2) = 1$. This demonstrates that there exist indecomposables in Oppermann-Thomas cluster tilting subcategories for $d>1$ that do not have sign coherence in their $c$-vector. It also means that by Lemma \ref{coherenceLemma}, $u$ is not mutable in $\mathscr{U}$.
\bibliography{mybib}{}
\bibliographystyle{mybib}
School of Mathematics and Statistics, Newcastle University, Newcastle upon Tyne, NE1 7RU, United Kingdom \\
\textit{Email address}: j.reid4@ncl.ac.uk
\end{document}